\newcommand{\tallstrut}{\vphantom{\frac{5_A}{4,10^3}}} 
\DeclareMathOperator{\sig}{sig}
\newtheorem{thm}{Theorem}
\newtheorem{assumption}{Assumption}
\newtheorem{remark}{Remark}
\newtheorem{lemma}{Lemma}
\title{A Partially Distributed Fixed-Time Economic Dispatch Algorithm with Kron's Modeled Power Transmission Losses}
\author{Shivanshu Tripathi, Anoop Jain, and Abhisek K. Behera
\thanks{Shivanshu Tripathi and Anoop Jain are with the Department of Electrical Engineering, Indian Institute of Technology Jodhpur 342037, India. (E-mail: tripathi.5@iitj.ac.in; anoopj@iitj.ac.in). Abhisek K. Behera is with the Department of Electrical Engineering, Indian Institute of Technology Roorkee 247667, India. (E-mail:  abhisek.behera@ee.iitr.ac.in).}}
\begin{document}
\maketitle

\begin{abstract}
A partially distributed economic dispatch algorithm, which renders optimal value in fixed time with the objective of supplying the load requirement as well as the power transmission losses, is proposed in this paper. The transmission losses are modeled using Kron's $\mathcal{B}-$loss formula, under a standard assumption on the values of $\mathcal{B}-$coefficients. The total power supplied by the generators is subjected to time-varying equality constraints due to time-varying nature of the transmission losses. Using Lyapunov and optimization theory, we rigorously prove the convergence of the proposed algorithm and show that the optimal value of power is reached within a fixed-time, whose upper bound dependents on the values of $\mathcal{B}-$coefficients, parameters characterizing the convexity of the cost functions associated with each generator and the interaction topology among them. Finally, an example is simulated to illustrate the theoretical results. 
 \end{abstract}

\begin{IEEEkeywords}
Fixed-time convergence, distributed control, economic dispatch, $\mathcal{B}$-loss coefficients, transmission loss.
\end{IEEEkeywords}

\section{Introduction}\label{introduction}
\subsection{Motivation and Literature Survey}
The economic dispatch problem (EDP) has been a celebrated problem in the optimal operation and management of power systems. With the rapid integration of renewable energy sources in the microgrid, solving EDP becomes a challenging task due to the scalability of the power network. To encounter such systems with increased robustness, reliability, and efficiency, the
centralized power generation infrastructure is slowly moving towards a distributed one \cite{yazdanian2014distributed}. The primary goal of EDP in a distributed infrastructure is to seek the minimum value of a collective cost function defined over a network of generators. This led to the requirement of an algorithm that works with renewable energy resources in a distributed manner. In this direction, there exist several approaches in the existing literature; for instance, \cite{wang2018distributed, pourbabak2017novel} discussed consensus-based algorithms; \cite{li2014optimal} described a distributed gradient-based algorithm; \cite{yun2019initialization} studied initialization-free privacy-guaranteed distributed algorithm; \cite{wan2021adaptive} presented a gossip-based distributed algorithm; an adaptive event-triggered distributed algorithm is considered in \cite{wan2021adaptive} etc.

One of the main concerns in designing a distributed algorithm is that it must ensure a faster convergence rate, as the power output changes frequently due to the continuous use of distributed generation systems and dynamic pricing \cite{kang2021distributed}. Addressing these facts, the efforts in existing literature have been towards developing algorithms with finite or fixed-time convergence based on \cite{bhat2000finite,polyakov2011nonlinear}. For example, \cite{chen2016distributed} proposes a distributed finite time algorithm which can address the EDP in the smart grid with/without power generation constraints; \cite{chen2018fixed} presents a distributed continuous-time algorithm to solve a convex optimization problem with equality constraint, which reaches the optimal value in fixed time; \cite{dai2020distributed} extends these results by proposing a new lemma which guarantees finite-time convergence with a tighter upper bound on the convergence time; \cite{baranwal2020robust} discusses user-specified fixed-time consensus-based algorithm to solve EDP with time-varying topology.

In addition to supplying the load demand, it is equally important that the distributed algorithm must satisfy the constraints posed by the time-varying power transmission losses \cite{zhong2013dynamic}. Existing works in this direction primarily consider a simplified model for the transmission losses and discuss asymptotic or exponential convergence to the optimal solution \cite{loia2013decentralized,binetti2014distributed,garcia2019approximating,kouveliotis2017distributed,zhang2015distributed,elsayed2014fully}. Further, \cite{chen2016distributed,chen2018fixed,dai2020distributed,baranwal2020robust} do not address the aspect of transmission losses. Unlike these works, in this paper, we propose an algorithm that accounts for Kron's modeled power transmission losses and reaches the optimal solution of the EDP in a fixed time.

\subsection{Contributions}
Aggregation of the Kron's modeled power transmission losses, by nature, poses an additional requirement of globally sharing the generated power information among the generators. Addressing this fact, the proposed algorithm in the paper considers that the generators have a two-layered communication topology$-$the generated power is shared globally in order to obtain the total power transmission losses, while the cost function and other auxiliary variables are shared locally. Such multi-layered topological considerations are motivated from many works \cite{wen2020distributed,qin2016leaderless,xu2016layered} in this direction in the context of multi-agent systems, deployed for various collaborative missions. Further, our analysis is based on certain assumptions relying on an interplay between the eigenvalues of the matrix $\mathcal{B}$, and parameters characterizing the convexity of the cost functions associated with each generator. The main contributions of this work can be summarized as follows:  
\begin{enumerate}
	\item[i)] We propose a novel consensus-based partially distributed algorithm, which solves the EDP in the presence of power transmission losses characterized by the Kron's $\mathcal{B}$-loss formula \cite{chang1994new}. 
	\item[ii)] Using tools from Lyapunov stability and optimization theory, we rigorously show that the optimal solution of the EDP is rendered in a fixed time. An analytical expression of the upper bound on the convergence time is obtained, which is independent of initial values of power and dependent on the eigenvalues of the Kron's $\mathcal{B}-$loss matrix, the convexity of the cost function associated with each generator and network topology among them.
\end{enumerate}

\subsection{Paper Structure}
The paper unfolds as follows: Section~\ref{section2} describes Kron's transmission loss formula, formulates the problem, and presents some preliminary results on finite-time stability. Section~\ref{section3} derives a few introductory results, describes the proposed algorithm, and obtain an upper bound on the convergence time. Theoretical results are illustrated through a simulation example in Section~\ref{section4}. Finally, Section~\ref{section5} concludes the paper and presents future directions of the work.

\paragraph*{Notations}
Throughout the paper, $\mathbb{R}$ and $\mathbb{R}_+$ denote the set of real and non-negative real numbers, respectively. For any $x \in \mathbb{R}$, we define function $ \sig^\mu: \mathbb{R} \to \mathbb{R}$ as $\sig^\mu (x) = |x|^\mu \text{sign}(x), \mu>0$, where $\text{sign}(x)$ is the signum function of $x$. The Hadamard product (or element-wise product) of two matrices $X$ and $Y$ of the same dimension $m \times n$ is defined as $[(X \odot Y)_{ij}] \coloneqq [X_{ij}][Y_{ij}]$. Let $\psi = [\psi_1, \ldots, \psi_N]^T \in \mathbb{R}^N$, then $\text{diag}\{\psi\}$ denotes the diagonal matrix with the entries of $\psi$ along its principal diagonal. $\nabla f(\bullet)$ and $\nabla^2 f(\bullet)$ represent the gradient and Hessian of the function $f: \mathbb{R}^n \to \mathbb{R}$ with respect to its argument $\bullet$, respectively. The Jacobian of a function $g: \mathbb{R}^n \to \mathbb{R}^m$ is defined to be an $m \times n$ matrix whose $(i, j)^{\text{th}}$ entry is $J _{ij} = \partial g_i/\partial x_j$. We represent by $\pmb{1}_N = [1, \ldots, 1]^T \in \mathbb{R}^N$ and $\pmb{0}_N = [0, \ldots, 0]^T \in \mathbb{R}^N$, respectively. $I_N$ denotes the identity matrix of order $N \times N$. We use symbols $\succeq, \preceq$ to represent element-wise comparison between two matrices of the same size. 

An undirected graph $\mathcal{G} = (\mathcal{V}, \mathcal{E}, \mathcal{A})$ is a collection of node set $\mathcal{V} = \{1, \ldots, N\}$, the edge set $\mathcal{E} \subseteq  \mathcal{V} \times \mathcal{V}$, along with edge weights captured by the adjacency matrix $\mathcal{A} = [a_{ij}] \in \mathbb{R}^{N\times N}$ with $a_{ij} = a_{ji} > 0$ if $(i, j) \in \mathcal{E}$, and $a_{ij} = 0$ otherwise. The Laplacian of $\mathcal{G}$ is defined as $\mathcal{L} = [\ell_{ij}] \in \mathbb{R}^{N \times N}$ with $\ell_{ii} = \sum_{j \in {\mathcal{N}_i}} a_{ij}$ and $\ell_{ij} = -a_{ij},~ \forall i \neq j$, where $\mathcal{N}_i$ is the set of neighboring vertices of vertex $i$. For an undirected and connected graph, $0$ is a simple eigenvalue of $\mathcal{L}$ with the corresponding eigenvector $\pmb{1}_N$, and all the other eigenvalues are positive. 

\section{Kron's Formula, Problem Description, and Preliminary Results}\label{section2}
This section reviews the Kron's $\mathcal{B}-$loss formula for power transmission losses, formulates the problem in this paper, and discuss some preliminary results.  

\subsection{Transmission Losses}
Transmission losses in a power system network are often evaluated using Kron's approximated loss Formula. An expression for transmission losses in terms of source loading and a set of loss coefficients (usually referred to as $\mathcal{B}-$coefficients) is of the quadratic form:
\begin{equation}\label{Transmission losses B loss}
P_L = \sum_{i = 1}^N \sum_{j = 1}^N P_i \mathcal{B}_{ij} P_j + \sum_{i = 1}^N P_i\mathcal{B}_{i0} + \mathcal{B}_{00}, 
\end{equation}
where $\mathcal{B}_{ij} $, $\mathcal{B}_{i0}$ and $\mathcal{B}_{00}$ are constant $\mathcal{B}-$loss coefficients and can be evaluated using methods as discussed in \cite{chang1994new, ongsakul2019artificial}. Further, $P_i$ and $P_j$ are the power outputs of generators $i$ and $j$ in megawatts, respectively. The expression \eqref{Transmission losses B loss} can be compactly re-written as $P_L = \sum_{i = 1}^N P_{Li}$, where, 
\begin{equation}\label{trans 1}
P_{Li} = \sum_{j = 1}^N P_i \mathcal{B}_{ij} P_j + P_i\mathcal{B}_{i0} + \mathcal{B}_{{00}_i}, 
\end{equation}
is the power transmission loss associated with the $i^{\text{th}}$ generator and $\mathcal{B}_{00} = \sum_{i = 1}^N \mathcal{B}_{{00}_i}$. 

\subsection{Problem Formulation}
Consider a network comprising $N$ generators in a grid and the cost function of individual generators is given as $C_i(P_i)$. The main objective here is to cooperatively minimize the total cost, that is, the sum of all individual \emph{local} objective functions $C_i(P_i)$, while maintaining an equality constraint, defined in terms of the load demand and power transmission losses $P_L$. Let $D$ and $P_T$ be the total load demand, and total power supplied by the system of generators, respectively. With this description, the economic dispatch problem can be formulated as:
\begin{subequations}\label{opt_problem}
\begin{align}
\label{conditions 1} &\text{Min}~{C}(P) = \sum_{i = 1}^N C_i(P_i)\\
\nonumber &\text{subject to}~ \sum_{i = 1}^N {P_i} = D + P_L = \sum_{i = 1}^N D_{i} + \sum_{i = 1}^N P_{L_i}\\
\label{conditions 2} & ~~~~~~~~~~~~~~~~~~~~~~~~~~~~~~~~~~= \sum_{i = 1}^N D_{i0} + \sum_{i = 1}^N P_{L_i} = P_T,
\end{align}
\end{subequations}
where $D_{i0}$ is the initial value of the time-varying load demand $D_i$ correspond to the $i^{\text{th}}$ generator-load pair. It is assumed that the load demand is constant at all time, that is, $\sum_{i = 1}^N D_{i} = \sum_{i = 1}^N D_{i0}$ for all $t \geq 0$, which is often a standard assumption in power system networks \cite{chen2018fixed}. It is worth noting that inclusion of power transmission losses $P_L$ does not result in trivially regularizing the overall cost function \eqref{conditions 1}, instead, it affects the equality constraints \eqref{conditions 2} of the optimization problem \eqref{opt_problem} and makes it challenging. Unless otherwise stated, $P_i \geq 0, \forall i$ in our analysis, as the generated power can not be negative.

\subsection{Some Preliminary Results}
Below we describe some useful results that will be helpful in the sequel. 
\begin{lemma}[\hspace{-.1pt}\cite{polyakov2011nonlinear}]\label{Lemma}
	Consider the dynamical system $\dot{x} = f(x(t))$, where $x \in \mathbb{R}^n$, $f: \mathbb{R}^N \to \mathbb{R}^N$ is a continuous function with $f(\pmb{0}_N) = \pmb{0}_N$. Assume that the origin is the equilibrium point of the system. If there exist a continuous radially unbounded Lyapunov function $V: \mathbb{R}^N \to \mathbb{R}_{+} \cup \{0\}$ such that $V(x) = 0 \Leftrightarrow x = 0$ and any solution of $x(t)$ of the system satisfies the inequality $\dot V(x(t)) \leq  -(\alpha V^p(x(t)) + \beta V^q(x(t)))^k$ for some $\alpha, \beta, p, q, k > 0; pk < 1, qk > 1$, then the origin of the system is globally fixed-time stable, and the following estimates of the settling time holds:
	\begin{equation}\label{settling time}
	T_s \leq \frac{1}{{{\alpha ^k}(1 - pk)}} + \frac{1}{{{\beta ^k}(qk - 1)}}.
	\end{equation}
\end{lemma}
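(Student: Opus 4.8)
The plan is to reduce the vector differential inequality to a scalar one and then integrate it in two separate regimes. Since $f(\pmb{0}_N)=\pmb{0}_N$ and $V(x)=0\Leftrightarrow x=0$, the scalar quantity $V(t):=V(x(t))$ satisfies $\dot V\le -(\alpha V^p+\beta V^q)^k<0$ whenever $x\neq 0$, so $V$ is strictly decreasing along any nonzero solution and vanishes only at the origin. Invoking the comparison principle for scalar differential inequalities, it suffices to bound the time at which the solution of the one-dimensional problem $\dot y=-(\alpha y^p+\beta y^q)^k$, $y(0)=V(x(0))\ge 0$, reaches zero; the comparison principle gives $V(t)\le y(t)$, and $y$ hitting zero forces $V$, and hence $x$, to reach the origin.

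The key observation I would exploit is that, because $t\mapsto t^k$ is increasing for $k>0$ and both summands are nonnegative, one has the two one-term lower bounds $(\alpha V^p+\beta V^q)^k\ge \beta^k V^{qk}$ and $(\alpha V^p+\beta V^q)^k\ge \alpha^k V^{pk}$, valid for all $V\ge 0$. I would then split the trajectory into two phases according to whether $V\ge 1$ or $V\le 1$, using the first bound in the former regime and the second in the latter. The whole point of placing the exponents on opposite sides of $1$ (namely $qk>1$ and $pk<1$) is that each bound is the effective one precisely in its own regime and yields a settling-time estimate that is independent of the initial value.

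In Phase~1 ($V\ge 1$) I use $\dot V\le -\beta^k V^{qk}$; separating variables and integrating from the initial value down to the level $V=1$ yields a time
\begin{equation*}
t_1\le \frac{1-V(0)^{1-qk}}{\beta^k(qk-1)}\le \frac{1}{\beta^k(qk-1)},
\end{equation*}
where the last step uses $qk>1$, so that $V(0)^{1-qk}\in(0,1]$ for $V(0)\ge 1$ and the bound no longer depends on $V(0)$. In Phase~2 ($0\le V\le 1$) I use $\dot V\le -\alpha^k V^{pk}$; integrating from $V=1$ to $V=0$ and using $pk<1$ gives the additional time $t_2-t_1\le \frac{1}{\alpha^k(1-pk)}$. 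Summing the two contributions produces $T_s\le \frac{1}{\alpha^k(1-pk)}+\frac{1}{\beta^k(qk-1)}$, which is exactly \eqref{settling time}; if $V(0)\le 1$ then Phase~1 is vacuous and the same bound holds a fortiori. Radial unboundedness of $V$ makes this estimate global.

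The main obstacle is rigor at two delicate points rather than the algebra itself. First, the scalar reduction must be justified when $V$ is merely continuous and not necessarily differentiable, which requires interpreting $\dot V$ as an upper-right Dini derivative along solutions and invoking the corresponding comparison lemma. Second, the sign and exponent bookkeeping in Phase~1 is where the argument must be handled carefully: it is precisely the condition $qk>1$ that keeps the estimate from blowing up as $V(0)\to\infty$, and this initial-value independence is exactly what distinguishes fixed-time from ordinary finite-time convergence.
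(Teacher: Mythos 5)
Your proof is correct and is essentially the standard argument for this result; the paper itself offers no proof, simply citing \cite{polyakov2011nonlinear}, and your two-phase integration (dominating with $\beta^k V^{qk}$ for $V\ge 1$ and with $\alpha^k V^{pk}$ for $V\le 1$, then using $qk>1$ and $pk<1$ to make each integral converge independently of $V(0)$) is precisely how the cited lemma is established. Your remarks on the Dini-derivative interpretation and the comparison lemma address the only genuinely delicate points, so nothing is missing.
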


\begin{lemma}[\hspace{-.1pt}\cite{zuo2015nonsingular}]\label{Lemma 2}
	Let  $\zeta_i \geq 0$ for $i = \{1, \ldots, N\}$. Then
	\begin{subequations}\label{sub equation 1}
		\begin{align}
		\sum_{i = 1}^N {\zeta_i^m}  & \geq \left(\sum_{i = 1}^N \zeta_i \right)^m,~\text{if}~0 < m \leq 1, \\
		\sum_{i = 1}^N {\zeta_i^m}  & \geq N^{(1-m)} \left(\sum_{i = 1}^N \zeta_i \right)^m,~\text{if}~1 < m < \infty. 
		\end{align}
	\end{subequations}
\end{lemma}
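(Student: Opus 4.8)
The plan is to treat the two regimes separately, in each case reducing to a standard convexity property of the map $x \mapsto x^m$ on $[0,\infty)$. Throughout I set $S = \sum_{i=1}^N \zeta_i$ and dispose of the degenerate case $S = 0$ first, in which every $\zeta_i = 0$ and both inequalities hold trivially with equality. For the remainder I assume $S > 0$.

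For the subadditive regime $0 < m \leq 1$, I would normalize: define $t_i = \zeta_i/S \in [0,1]$ so that $\sum_{i=1}^N t_i = 1$. Since raising a number in $[0,1]$ to a power no larger than $1$ cannot decrease it, we have $t_i^m \geq t_i$ for each $i$, and hence $\sum_{i=1}^N t_i^m \geq \sum_{i=1}^N t_i = 1$. Multiplying through by $S^m$ and using $\zeta_i^m = S^m t_i^m$ yields $\sum_{i=1}^N \zeta_i^m \geq S^m = \left(\sum_{i=1}^N \zeta_i\right)^m$, which is the first claimed bound.

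For the superadditive regime $1 < m < \infty$, the map $\phi(x) = x^m$ is convex on $[0,\infty)$, so Jensen's inequality applied with the uniform weights $1/N$ gives $\frac{1}{N}\sum_{i=1}^N \zeta_i^m \geq \left(\frac{1}{N}\sum_{i=1}^N \zeta_i\right)^m$. Rearranging and absorbing the factor $N^{-m}$ produces $\sum_{i=1}^N \zeta_i^m \geq N^{1-m}\left(\sum_{i=1}^N \zeta_i\right)^m$, as required.

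There is no genuine obstacle here: once the normalization step (respectively, the application of Jensen's inequality) is in place, both bounds are immediate. The only points demanding care are the degenerate case $S = 0$ and ensuring that the direction of Jensen's inequality is handled correctly, since convexity reverses the comparison relative to the concave case. This reversal is precisely why the constant $N^{1-m}$ appears for $m > 1$ but is absent (equal to $1$) at $m = 1$. An alternative route, should one prefer not to invoke Jensen's inequality directly, is induction on $N$ built on the two-term estimates $a^m + b^m \geq (a+b)^m$ for $0 < m \leq 1$ and $a^m + b^m \geq 2^{1-m}(a+b)^m$ for $m \geq 1$, each of which follows from the same convexity or concavity of $x^m$.
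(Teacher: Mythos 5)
Your proof is correct. Note that the paper itself offers no proof of this lemma --- it is imported verbatim from \cite{zuo2015nonsingular} --- so there is no in-paper argument to compare against; your write-up simply supplies the standard justification. Both halves are sound: the normalization $t_i=\zeta_i/S$ together with $t^m\ge t$ on $[0,1]$ handles $0<m\le 1$, and Jensen's inequality for the convex map $x\mapsto x^m$ with uniform weights gives exactly the factor $N^{1-m}$ for $m>1$; the degenerate case $S=0$ is correctly disposed of. One small caution about your closing aside: the unweighted two-term estimate $a^m+b^m\ge 2^{1-m}(a+b)^m$ does not by itself carry an induction on $N$ for the $m>1$ case (the inductive step needs the weighted version $N^{1-m}a^m+b^m\ge (N+1)^{1-m}(a+b)^m$, i.e.\ Jensen with weights $\tfrac{N}{N+1},\tfrac{1}{N+1}$, and padding with zeros to reach a power of two pushes the constant the wrong way since $1-m<0$). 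Since that remark is an optional alternative and your primary argument stands on its own, this does not affect the validity of the proof.
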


\section{Main Results}\label{section3}
This section presents our main results by proposing an algorithm to solve the optimization problem \eqref{conditions 1} with equality constraints \eqref{conditions 2} in the presence of transmission losses \eqref{Transmission losses B loss}. The proposed algorithm is as follows:
\begin{equation}\label{algorithm}
\left\{
\begin{array}{>{\displaystyle\tallstrut}l@{}}
{\lambda _i} = \frac{{\partial {C_i}({P_i})}}{{\partial {P_i}}}\\ 
\addlinespace
{H_i} = \left( {1 + \frac{{\partial {P_{L_i}}}}{{\partial {P_i}}}} \right)\\
\addlinespace
{{\dot z}_i} =  - {k_1}\sig\left[ \sum\limits_{j \in {\mathcal{N}_i}} {a_{ij}}\left({H_j}{\lambda _j} - {H_i}{\lambda _i}\right)  \right]^\mu \\ ~~~~~ - {k_2} \sig\left[{\sum\limits_{j \in {\mathcal{N}_i}} {{a_{ij}}\left({H_j}{\lambda _j} -{H_i}{\lambda _i}\right)} } \right] ^\nu \\
\addlinespace
{P_i} = \sum\limits_{j \in {\mathcal{N}_i}} {{a_{ij}}({z_j} - {z_i}) + {D_{i0}} + {P_{Li}}}, 
\end{array}
\right.
\end{equation}
where $k_1$, $k_2$, $\mu$ and $\nu$ are constants such that they satisfy conditions $0 < \mu < 1$ and $\nu > 1$; $H_i$, $z_i$ and $\lambda_i$ are intermediate variables. Unlike \cite{chen2016distributed,chen2018fixed}, the algorithm \eqref{algorithm} also accounts for transmission losses $P_L$ by assimilation of an additional term $H_i$, which further influences the dynamics $\dot{z}_i$ of the auxiliary variable $z_i$. Later, we also discuss through simulations that proposed consensus dynamics of auxiliary variables ${z}_i$ can handle a special class of bounded disturbances. 

\begin{remark}
As will be shown in below Lemma~\ref{lemma3.1}, the computation of term $\partial P_{L_i}/\partial P_i$ in algorithm \eqref{algorithm} requires the information of power generated by all the generators. Thereby, the implementation of \eqref{algorithm} requires global topology for obtaining $H_i$, and local topology for sharing the information about the cost function $\lambda_i$ and the auxiliary variables $z_i$ for each $i$. This is the reason we call it a partially distributed consensus algorithm. Although the problem can be solved in a completed distributed way by considering only a single local network for the simplified Korn's modeled transmission losses $P_L = \sum_{i=1}^{N} \mathcal{B}_i P_i^2$ as discussed in \cite{loia2013decentralized,kouveliotis2017distributed}, however, this would be a special case of the problem addressed in our paper. 
\end{remark}

Before proceeding further, we incorporate the following assumptions in our analysis:

\begin{assumption}[Network topology]\label{Assumption 1}
	The generators have a two-layered network topology $-$ the information (only) about generated powers is shared globally among them, and the information about their cost function and other auxiliary variables is shared locally, according to an undirected and connected topology.
\end{assumption}

\begin{assumption}[Cost function]\label{Assumption 2}
For $i = \{1, \ldots, N\}$, the cost function ${C}_i(P_i)$ is a strongly convex function such that $\nabla^2 {C}_i(P_i) \geq \sigma > 0$ for constant $\sigma \in \mathbb{R}_+$. Further, there exists a $\delta \in \mathbb{R} \setminus \{0\}$ such that $\nabla {C}_i(P_i) \geq \delta$, $\forall i$.
\end{assumption}
 
\begin{assumption}[$\mathcal{B}-$loss coefficients]\label{Assumption 3}
The kron's $\mathcal{B}-$loss coefficient matrix $\mathcal{B} = [\mathcal{B}_{ij}], \forall i, j = \{1, \ldots, N\}$ in \eqref{Transmission losses B loss} is symmetrical with all its elements $\mathcal{B}_{ij} \geq 0$ such that  
\begin{enumerate}[label={\emph{(A\arabic*)}}]
\item $0 \leq {{\partial {P_{L_i}}}}/{{\partial {P_i}}} < 1, \forall i= \{1, \ldots, N\}$. 
\item Let $b_1 \leq b_2 \leq \cdots \leq b_N$ be the eigenvalues of $\mathcal{B}$. Denote by $\rho = \min_{i}\{\mathcal{B}_{i0}\}$. Then, the parameters $\sigma, \delta$ and $\rho$ are such that they satisfy: $(1 + \rho)\sigma + b_1 \delta > 0, \ \text{if} \ \delta > 0$, and $(1 + \rho)\sigma + b_N \delta > 0, \ \text{if} \ \delta < 0$.
\end{enumerate}
\end{assumption}

\begin{remark}\label{B_coeff_justification}
It is to be noted that Assumption~\emph{(A1)} is common for practical power system networks (for instance, please refer to \cite{ongsakul2019artificial,fromm1985evaluation,moon2002slack,el1995electrical}). This is due to the fact that the values of $\mathcal{B}$-loss coefficients are usually very small such that total transmission losses $P_L$ are negligible compared to the value of total load demand $D$. Following this, one can write from \eqref{conditions 2} that $\sum_{i = 1}^N {P_i} \approx \sum_{i = 1}^N D_{i0} = \bar{D}$ (say) for small values of $\mathcal{B}$-coefficient. This implies that ${P_i} \leq \bar{D}$ for each $i$. According to Assumption~\emph{(A1)}, it follows from \eqref{trans 1} that the inequality ${{\partial {P_{L_i}}}}/{{\partial {P_i}}} = \sum_{j = 1,j\neq i}^N \mathcal{B}_{ij} P_j + 2\mathcal{B}_{ii}P_i + \mathcal{B}_{i0} < 1$ must hold true for all $t \geq 0$. For the given $\bar{D}$, this can be assured only if the $\mathcal{B}-$coefficients are such that $\sum_{j = 1, j\neq i}^N \mathcal{B}_{ij} + 2 \mathcal{B}_{ii} + \mathcal{B}_{i0}{\bar{D}}^{-1} < {\bar{D}}^{-1}$ for each $i$. In fact, the term ${{\partial {P_{L_i}}}}/{{\partial {P_i}}}$ can be obtained from the well known notion of penalty factor, defined by $1/(1 - ({{\partial {P_{L_i}}}}/{{\partial {P_i}}}))$ in the literature \cite{fromm1985evaluation,moon2002slack,el1995electrical}, and justifies our assumption. Obviously, $\partial P_L/\partial P_i \geq 0$, as $P_i \geq 0$ for all $i$ and $t \geq 0$. 
\end{remark}

We now discuss the following lemmas before stating the main result.  

\begin{lemma}\label{lemma3.1}
Under Assumption~\ref{Assumption 3}, the following relation holds:  
\begin{equation}\label{eq r3}
\frac{{\partial {P_{L_i}}}}{{\partial {P_i}}}= \frac{{1}}{{2}} \frac{{\partial {P_{L}}}}{{\partial {P_i}}}+\mathcal{B}_{ii}P_i+\frac{{\mathcal{B}_{i0}}}{{2}}, \ \forall i.
\end{equation}
\end{lemma}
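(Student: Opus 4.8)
The plan is to compute both partial derivatives explicitly from the quadratic forms \eqref{Transmission losses B loss} and \eqref{trans 1}, and then verify that the claimed identity \eqref{eq r3} is a purely algebraic consequence of the symmetry of $\mathcal{B}$.

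First I would isolate the diagonal term in the local loss \eqref{trans 1} by writing $P_{L_i} = \mathcal{B}_{ii}P_i^2 + P_i\sum_{j\neq i}\mathcal{B}_{ij}P_j + \mathcal{B}_{i0}P_i + \mathcal{B}_{00_i}$ and differentiating with respect to $P_i$. This gives $\partial P_{L_i}/\partial P_i = 2\mathcal{B}_{ii}P_i + \sum_{j\neq i}\mathcal{B}_{ij}P_j + \mathcal{B}_{i0}$, which is precisely the expression already recorded in Remark~\ref{B_coeff_justification}.

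Next I would differentiate the global loss $P_L$ in \eqref{Transmission losses B loss}. The only point that needs care $-$ and hence the main (if modest) obstacle $-$ is that the variable $P_i$ enters the double sum $\sum_k\sum_j P_k\mathcal{B}_{kj}P_j$ in two distinct ways: once through the outer index ($k=i$) and once through the inner index ($j=i$), with the pure diagonal term $\mathcal{B}_{ii}P_i^2$ shared between them. Collecting these contributions yields $\partial P_L/\partial P_i = \sum_{j\neq i}\mathcal{B}_{ij}P_j + \sum_{k\neq i}\mathcal{B}_{ki}P_k + 2\mathcal{B}_{ii}P_i + \mathcal{B}_{i0}$. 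Here I would invoke the symmetry $\mathcal{B}_{ij}=\mathcal{B}_{ji}$ guaranteed by Assumption~\ref{Assumption 3} to merge the two single sums into one, obtaining $\partial P_L/\partial P_i = 2\sum_{j\neq i}\mathcal{B}_{ij}P_j + 2\mathcal{B}_{ii}P_i + \mathcal{B}_{i0}$.

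Finally I would substitute this into the right-hand side of \eqref{eq r3}: scaling by $\tfrac12$ produces $\sum_{j\neq i}\mathcal{B}_{ij}P_j + \mathcal{B}_{ii}P_i + \tfrac12\mathcal{B}_{i0}$, and adding the remaining terms $\mathcal{B}_{ii}P_i + \tfrac12\mathcal{B}_{i0}$ reproduces exactly the expression for $\partial P_{L_i}/\partial P_i$ from the first step. Since both sides coincide term-by-term, the identity follows. The argument is elementary calculus once the bookkeeping of the double sum is handled; the symmetry of $\mathcal{B}$ is the essential ingredient that makes the two cross-terms collapse and thereby generates the factor of $\tfrac12$ relating the local and global derivatives.
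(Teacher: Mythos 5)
Your proposal is correct and follows essentially the same route as the paper: differentiate \eqref{trans 1} and \eqref{Transmission losses B loss} with respect to $P_i$ to obtain \eqref{eq r2} and \eqref{eq r1}, use the symmetry $\mathcal{B}_{ij}=\mathcal{B}_{ji}$ to collapse the two cross-sums in the global derivative, and compare term by term. Your write-up merely spells out the double-sum bookkeeping more explicitly than the paper does.
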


\begin{proof}
Differentiating \eqref{Transmission losses B loss} and \eqref{trans 1} with respect to $P_i$, we have
\begin{align}
\label{eq r1} \frac{{\partial {P_{L}}}}{{\partial {P_i}}} &=  2\sum\limits_{j = 1,j\neq i}^N \mathcal{B}_{ij} P_j  + 2\mathcal{B}_{ii}P_i  + \mathcal{B}_{i0},\\
\label{eq r2} \frac{{\partial {P_{L_i}}}}{{\partial {P_i}}} &=  \sum\limits_{j = 1,j\neq i}^N \mathcal{B}_{ij} P_j  + 2\mathcal{B}_{ii}P_i  + \mathcal{B}_{i0},
\end{align}
which leads to required result, as $[\mathcal{B}_{ij}] = [\mathcal{B}_{ji}], \forall i, j$.
\end{proof}

\begin{lemma}\label{Lemma3}
Let $P = [P_1, \ldots, P_N]^T$ be the vector of all generator bus net outputs. Define $\mathcal{F} = \nabla P_L(P)\odot\nabla \mathcal{C}(P), \mathcal{M} = \nabla\mathcal{R}(P)\odot\nabla \mathcal{C}(P)$, and $\mathcal{Q} = \nabla^2 \mathcal{C}(P) + 0.5 \nabla \mathcal{F} + \nabla \mathcal{M}$, where,
\begin{equation*}
\nabla\mathcal{R}(P) = \left[\left(\mathcal{B}_{11}P_1 + \frac{\mathcal{B}_{10}}{2}\right), \ldots, \left(\mathcal{B}_{NN}P_N+\frac{\mathcal{B}_{N0}}{2}\right)\right]^T.
\end{equation*}
Further, let $\mathcal{S}$ be an $N \times N$ matrix with diagonal entries $[\mathcal{S}_{ii}] = (1 + \mathcal{B}_{i0})\sigma + 2 \mathcal{B}_{ii}\delta$ and off-diagonal entries $[S_{ij}] = \mathcal{B}_{ij}\delta$. Under Assumptions~\ref{Assumption 2} and \ref{Assumption 3}, the following properties hold:
\begin{enumerate}[label={\emph{(R\arabic*)}}]
	\item $[\nabla \mathcal{F}_{ii}] \geq  2\mathcal{B}_{ii}\delta+\mathcal{B}_{i0}\sigma$ and $[\nabla \mathcal{F}_{ij}] \geq  2\mathcal{B}_{ij}\delta$.
	\item $\nabla \mathcal{M}$ is a diagonal matrix with $[\nabla \mathcal{M}_{ii}] \geq \left(\mathcal{B}_{ii}\delta + \frac{B_{i0}}{2}\sigma\right)$.
	\item $[\mathcal{Q}_{ii}] \geq \sigma + 2 \mathcal{B}_{ii}\delta + \mathcal{B}_{i0}\sigma$ and $[\mathcal{Q}_{ij}] \geq \mathcal{B}_{ij}\delta$.
	\item $\mathcal{S}$ is a symmetric matrix satisfying $\mathcal{S} \preccurlyeq \mathcal{Q}$.
	\item Let $\tau_1 \leq \tau_2 \leq \cdots \leq \tau_N$ be the eigenvalues of $\mathcal{S}$. Then, $b_1 \delta \leq \tau_i - \sigma(1 + \mathcal{B}_{i0}) \leq b_N \delta$, if $\delta > 0$; and $b_N \delta \leq \tau_i - \sigma(1 + \mathcal{B}_{i0}) \leq b_1 \delta$, if $\delta < 0$, for each $i$, where $b_1$ and $b_N$ are the smallest and largest eigenvalues of $\mathcal{B}$, as defined in Assumption~\emph{(A2)}.
\end{enumerate}
\end{lemma}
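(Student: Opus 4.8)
The plan is to establish (R1)--(R4) by direct entrywise differentiation with careful sign-tracking, and to establish (R5) by a Weyl-type eigenvalue bound applied to a convenient additive decomposition of $\mathcal{S}$. The facts I would lean on throughout are the diagonal Hessian structure $\nabla^2\mathcal{C}(P)=\text{diag}\{\nabla^2 C_i(P_i)\}$ (each $C_i$ depends on $P_i$ only), the convexity and gradient bounds $\nabla^2 C_i(P_i)\geq\sigma$ and $\nabla C_i(P_i)\geq\delta$ from Assumption~\ref{Assumption 2}, the nonnegativity $\mathcal{B}_{ij}\geq 0$, $P_i\geq 0$, and the consequence of \eqref{eq r1} that $\partial P_L/\partial P_i = 2\sum_{j\neq i}\mathcal{B}_{ij}P_j+2\mathcal{B}_{ii}P_i+\mathcal{B}_{i0}\geq\mathcal{B}_{i0}\geq 0$. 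The one recurring subtlety is that, since $\mathcal{B}_{ij}\geq 0$, multiplying $\nabla C_i\geq\delta$ by $\mathcal{B}_{ij}$ preserves the inequality \emph{regardless of the sign of $\delta$}, whereas multiplying by $\nabla^2 C_i$ requires the multiplying factor to be nonnegative; I will invoke each monotonicity only where the relevant factor has a definite sign.

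For (R1), I would write $\mathcal{F}_i=(\partial P_L/\partial P_i)\,\nabla C_i$ and differentiate by the product rule: the diagonal entry is $[\nabla\mathcal{F}]_{ii}=2\mathcal{B}_{ii}\nabla C_i+(\partial P_L/\partial P_i)\nabla^2 C_i$, bounded below by $2\mathcal{B}_{ii}\delta$ (first summand, using $\mathcal{B}_{ii}\geq0$) plus $\mathcal{B}_{i0}\sigma$ (second summand, using $\partial P_L/\partial P_i\geq\mathcal{B}_{i0}$ and $\nabla^2 C_i\geq\sigma$); for $j\neq i$ only the loss factor depends on $P_j$, so $[\nabla\mathcal{F}]_{ij}=2\mathcal{B}_{ij}\nabla C_i\geq 2\mathcal{B}_{ij}\delta$. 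Property (R2) is analogous but simpler: because each $\mathcal{R}$-component $\mathcal{B}_{ii}P_i+\mathcal{B}_{i0}/2$ and each $\nabla C_i$ depend on $P_i$ only, $\nabla\mathcal{M}$ is diagonal, and the product rule gives $[\nabla\mathcal{M}]_{ii}=\mathcal{B}_{ii}\nabla C_i+(\mathcal{B}_{ii}P_i+\mathcal{B}_{i0}/2)\nabla^2 C_i\geq \mathcal{B}_{ii}\delta+\tfrac{\mathcal{B}_{i0}}{2}\sigma$, where I use $\mathcal{B}_{ii}P_i+\mathcal{B}_{i0}/2\geq\mathcal{B}_{i0}/2\geq0$.

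Property (R3) is then pure assembly of $\mathcal{Q}=\nabla^2\mathcal{C}+\tfrac12\nabla\mathcal{F}+\nabla\mathcal{M}$: on the diagonal I add $\sigma$, $\tfrac12(2\mathcal{B}_{ii}\delta+\mathcal{B}_{i0}\sigma)$ and $\mathcal{B}_{ii}\delta+\tfrac{\mathcal{B}_{i0}}{2}\sigma$ to get $\sigma+2\mathcal{B}_{ii}\delta+\mathcal{B}_{i0}\sigma$, while off the diagonal only $\tfrac12\nabla\mathcal{F}$ survives, giving $\mathcal{B}_{ij}\delta$. For (R4), symmetry of $\mathcal{S}$ is immediate from $\mathcal{B}_{ij}=\mathcal{B}_{ji}$, and $\mathcal{S}\preccurlyeq\mathcal{Q}$ holds \emph{entrywise} because the lower bounds in (R3) are \emph{exactly} the entries $\mathcal{S}_{ii}=(1+\mathcal{B}_{i0})\sigma+2\mathcal{B}_{ii}\delta$ and $\mathcal{S}_{ij}=\mathcal{B}_{ij}\delta$; note that $\mathcal{Q}$ itself is not symmetric (the factor $\nabla C_i$ in $[\nabla\mathcal{F}]_{ij}$ breaks it), which is precisely why the comparison must be read entrywise and not in the positive-semidefinite order.

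The crux, and where I expect the real work, is (R5). I would write $\mathcal{S}=\Lambda+\delta\mathcal{B}$ with $\Lambda=\text{diag}\{\sigma(1+\mathcal{B}_{i0})+\delta\mathcal{B}_{ii}\}$, so that the full eigenvalue spread of $\mathcal{B}$, namely $[b_1,b_N]$, is isolated in the term $\delta\mathcal{B}$. Since $\Lambda$ and $\mathcal{B}$ need not commute, their eigenvalues do not simply add, so I would invoke Weyl's inequality $\lambda_k(\Lambda)+\lambda_{\min}(\delta\mathcal{B})\leq\tau_k\leq\lambda_k(\Lambda)+\lambda_{\max}(\delta\mathcal{B})$, identifying $\lambda_k(\Lambda)$ with the diagonal contribution $\sigma(1+\mathcal{B}_{i0})$. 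The sign of $\delta$ enters exactly here: for $\delta>0$ one has $\lambda_{\min}(\delta\mathcal{B})=b_1\delta$, $\lambda_{\max}(\delta\mathcal{B})=b_N\delta$, yielding $b_1\delta\leq\tau_i-\sigma(1+\mathcal{B}_{i0})\leq b_N\delta$, whereas for $\delta<0$ the extremes swap ($\lambda_{\min}(\delta\mathcal{B})=b_N\delta$, $\lambda_{\max}(\delta\mathcal{B})=b_1\delta$), giving the reversed bound. The payoff the convergence analysis ultimately needs is the lower bound $\tau_{\min}\geq\sigma(1+\rho)+b_1\delta>0$ for $\delta>0$ (and the analogous bound with $b_N$ for $\delta<0$), which is exactly the positivity guaranteed by Assumption~\emph{(A2)}, so that $\mathcal{S}\succ0$.
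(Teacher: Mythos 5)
Your proof is correct and follows essentially the same route as the paper's: product rule plus the bounds of Assumptions 2 and 3 for (R1)--(R3), entrywise assembly and comparison for (R4) (the paper's $\preccurlyeq$ is indeed the elementwise order), and Weyl's inequality applied to a diagonal-plus-$\delta\mathcal{B}$ splitting for (R5). Your exact decomposition $\Lambda+\delta\mathcal{B}$ with $\Lambda=\mathrm{diag}\{\sigma(1+\mathcal{B}_{i0})+\delta\mathcal{B}_{ii}\}$ is in fact slightly more careful than the paper's stated splitting $\sigma\,\mathrm{diag}\{1+\mathcal{B}_{i0}\}+\delta\mathcal{B}$ (which misses $\delta\mathcal{B}_{ii}$ on the diagonal of $\mathcal{S}$), although you then discard that same $\delta\mathcal{B}_{ii}$ term when identifying $\lambda_k(\Lambda)$ with $\sigma(1+\mathcal{B}_{i0})$, so both arguments land at the same level of precision.
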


Please refer to Appendix for the proof.  We are now ready to state the main result: 

\begin{thm}\label{thm1}
The algorithm \eqref{algorithm}, under the Assumptions~\ref{Assumption 1}, \ref{Assumption 2} and \ref{Assumption 3}, solves the economic load dispatch problem \eqref{opt_problem} in a fixed time.
\end{thm}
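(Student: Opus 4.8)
\emph{Overview of the plan.} I would break the argument into three parts: (i) \emph{feasibility for all time}, (ii) \emph{optimality of the equilibrium}, and (iii) \emph{fixed-time attractivity} of that equilibrium via Lemmas~\ref{Lemma} and \ref{Lemma 2}. The two-layer structure of Assumption~\ref{Assumption 1} is what lets every agent form $H_i$ (through global power sharing, so that $\partial P_{L_i}/\partial P_i$ is computable) while the connected local graph supplies the Laplacian $\mathcal L$ used throughout. First I would dispose of feasibility directly from the algebraic structure of \eqref{algorithm}: summing its last line over $i$ and using $\pmb 1_N^T\mathcal L=\pmb 0_N^T$ (so that $\sum_i\sum_{j\in\mathcal N_i}a_{ij}(z_j-z_i)$ cancels by symmetry of $\mathcal A$) gives $\sum_i P_i=\sum_i D_{i0}+\sum_i P_{L_i}=D+P_L$, i.e. the equality constraint \eqref{conditions 2} holds identically, independently of $z$. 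Writing the last line of \eqref{algorithm} as $P=-\mathcal L z+[D_{10},\dots,D_{N0}]^T+[P_{L_1},\dots,P_{L_N}]^T$, this implicit relation is uniquely solvable for $P$ because $I_N-\mathcal J$, with $\mathcal J$ the Jacobian of $[P_{L_1},\dots,P_{L_N}]^T$, is a nonsingular $M$-matrix in the small-loss regime of \emph{(A1)}; hence the trajectory is well defined and feasible for all $t$, and only optimality remains.

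\emph{Optimality.} The quantity that \eqref{algorithm} drives to agreement is $y:=[H_1\lambda_1,\dots,H_N\lambda_N]^T$. By Lemma~\ref{lemma3.1} one has $H_i\lambda_i=\lambda_i+\tfrac12\mathcal F_i+\mathcal M_i$, so $y=\nabla\mathcal C(P)+\tfrac12\mathcal F+\mathcal M$ and, crucially, $\partial y/\partial P=\nabla^2\mathcal C(P)+\tfrac12\nabla\mathcal F+\nabla\mathcal M=\mathcal Q$, the matrix of Lemma~\ref{Lemma3}. At an equilibrium $\dot z=\pmb 0_N$ forces every bracket in \eqref{algorithm} to vanish, i.e. $\mathcal L y=\pmb 0_N$, and connectivity of the local graph then yields $H_i\lambda_i=\theta$ for a common $\theta$; together with feasibility, a standard Lagrangian/KKT argument identifies this point as the minimizer of \eqref{opt_problem}. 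Uniqueness follows from strong convexity (Assumption~\ref{Assumption 2}) and from $\mathcal Q\succcurlyeq\mathcal S\succ0$, which Lemma~\ref{Lemma3}\emph{(R4)}--\emph{(R5)} with Assumption~\emph{(A2)} guarantees for \emph{both} signs of $\delta$: the smallest eigenvalue of $\mathcal S$ is bounded below by $(1+\rho)\sigma+b_1\delta$ (if $\delta>0$) or $(1+\rho)\sigma+b_N\delta$ (if $\delta<0$), which \emph{(A2)} forces to be positive.

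\emph{Fixed-time attractivity.} I would take $V=\tfrac12\,y^T\mathcal L y$, which vanishes exactly on the consensus (hence optimal) manifold and is radially unbounded transverse to $\pmb 1_N$. Writing $u_i$ for the $i$th bracketed term in \eqref{algorithm}, so that $u=-\mathcal L y$ and $\dot z_i=-k_1\sig(u_i)^\mu-k_2\sig(u_i)^\nu$, differentiation gives $\dot V=(\mathcal L y)^T\mathcal Q\dot P$ with $\dot P=-(I_N-\mathcal J)^{-1}\mathcal L\dot z$ from the $P$-relation. The elementary identity $u_i\,\sig(u_i)^\mu=|u_i|^{1+\mu}$ makes the raw input power $u^T\dot z=-k_1\sum_i|u_i|^{1+\mu}-k_2\sum_i|u_i|^{1+\nu}$ sign-definite, and the goal is to transfer this sign-definiteness through the positive-definite coupling $\mathcal Q(I_N-\mathcal J)^{-1}$ to obtain $\dot V\le-k_1c_1\sum_i|u_i|^{1+\mu}-k_2c_2\sum_i|u_i|^{1+\nu}$ for constants $c_1,c_2>0$ built from the smallest eigenvalue of $\mathcal S$ and the algebraic connectivity $\lambda_2(\mathcal L)$. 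Applying Lemma~\ref{Lemma 2} with $m=\tfrac{1+\mu}2\in(0,1)$ and $m=\tfrac{1+\nu}2>1$ to $\zeta_i=u_i^2$, together with $\norm{u}^2=\norm{\mathcal L y}^2\ge\lambda_2(\mathcal L)\,y^T\mathcal L y=2\lambda_2(\mathcal L)V$ on the disagreement subspace, converts the two sums into powers of $V$ and yields $\dot V\le-\alpha V^{(1+\mu)/2}-\beta V^{(1+\nu)/2}$ with explicit $\alpha,\beta$ depending on $k_1,k_2,\lambda_2(\mathcal L),N$ and the eigenvalues of $\mathcal S$ (hence on $b_1,b_N,\sigma,\delta$). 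This is exactly the hypothesis of Lemma~\ref{Lemma} with $p=\tfrac{1+\mu}2$, $q=\tfrac{1+\nu}2$, $k=1$, for which $pk<1\Leftrightarrow\mu<1$ and $qk>1\Leftrightarrow\nu>1$ hold by construction, so $V$ (and thus the optimality gap) reaches zero in the fixed time $T_s\le\frac{2}{\alpha(1-\mu)}+\frac{2}{\beta(\nu-1)}$, independent of the initial powers.

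\emph{Main obstacle.} The crux is the transfer of sign-definiteness in the third step through the non-symmetric operator $\mathcal Q(I_N-\mathcal J)^{-1}\mathcal L$ appearing in $\dot V$: unlike a pure consensus flow, here the incremental costs and the loss coupling sit between $\mathcal L y$ and $\dot z$, so the clean inequality $\dot V\le-k_1c_1\sum_i|u_i|^{1+\mu}-\cdots$ is not immediate, and it is precisely the lower bound $\mathcal Q\succcurlyeq\mathcal S$ with the eigenvalue localization \emph{(R5)} of Lemma~\ref{Lemma3}, and the two-sided condition \emph{(A2)}, that must do the work. Producing positive constants $c_1,c_2$ uniformly along the trajectory---using \emph{(A1)} to keep $(I_N-\mathcal J)^{-1}$ bounded and positive---is where the difficulty concentrates; everything after it is the routine Lemma~\ref{Lemma 2}/Lemma~\ref{Lemma} bookkeeping that produces the advertised settling-time bound.
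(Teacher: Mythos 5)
Your feasibility argument (summing the last line of \eqref{algorithm} and using $\pmb{1}_N^T\mathcal{L}=\pmb{0}_N^T$) coincides with the paper's, and your identification $y=\nabla C(P)+\tfrac12\mathcal{F}+\mathcal{M}$ with $\partial y/\partial P=\mathcal{Q}$ is a correct and useful reformulation. However, the step you yourself flag as the ``main obstacle'' is a genuine gap, not a routine difficulty: with $V=\tfrac12 y^T\mathcal{L}y$ you arrive at $\dot V=-u^T\mathcal{Q}(I_N-\mathcal{J})^{-1}\mathcal{L}\bigl(k_1\sig(u)^\mu+k_2\sig(u)^\nu\bigr)$, and the desired bound $\dot V\le -k_1c_1\sum_i|u_i|^{1+\mu}-k_2c_2\sum_i|u_i|^{1+\nu}$ does \emph{not} follow from $\mathcal{Q}\succcurlyeq\mathcal{S}\succ 0$ or from boundedness of $(I_N-\mathcal{J})^{-1}$. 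The problem is that $\sig(\cdot)^\mu$ is a diagonal nonlinearity that does not commute with the matrix coupling: an inequality of the form $u^TM\,g(u)\ge c\,u^Tg(u)$ for all $u$, with $g$ odd, monotone and componentwise, is a diagonal-stability-type requirement that positive definiteness of a symmetric factor does not supply; worse, here $M=\mathcal{Q}(I_N-\mathcal{J})^{-1}\mathcal{L}$ is singular and non-symmetric, and $\sig(u)^\mu$ need not lie in the disagreement subspace even when $u=-\mathcal{L}y$ does. Eigenvalue localization of $\mathcal{S}$ (R5) and Assumption~(A2) cannot repair this because they control quadratic forms, not forms with a nonlinearity interposed.

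The paper's proof avoids this entirely by a different choice of coordinates and Lyapunov function. It observes that your $u_i$ equals exactly $\partial C/\partial z_i$ for the reduced unconstrained cost $C(z)$ of \eqref{min c} (this is the content of \eqref{partial der 3.1}--\eqref{partial der 5}, after truncating the Neumann series using (A1) -- your $(I_N-\mathcal{J})^{-1}$ is the exact version of that truncation), so the flow is the pure gradient flow $\dot z=-k_1\sig(\nabla C(z))^\mu-k_2\sig(\nabla C(z))^\nu$ with \emph{no coupling matrix between $\nabla C(z)$ and $\dot z$}. Taking $V=\tfrac12(C(z)-C(z^\star))^2$ then gives $\dot V=(C(z)-C(z^\star))\nabla^TC(z)\dot z$, in which $\nabla^TC(z)\sig(\nabla C(z))^\mu=\sum_i|\partial C/\partial z_i|^{1+\mu}$ is sign-definite by the elementary identity alone. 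The role of $\mathcal{Q}\succcurlyeq\mathcal{S}$, (R5), (A2) and the Courant--Fischer theorem is confined to establishing the gradient-dominance inequality $\|\nabla C(z)\|^2\ge 0.5(1+\rho)\tau_1\phi_2^2\,(C(z)-C(z^\star))$, i.e.\ they act on quadratic forms only, which is exactly where they are effective. If you want to salvage your route, you would need to either prove the diagonal-stability property of $\mathcal{Q}(I_N-\mathcal{J})^{-1}\mathcal{L}$ under these assumptions (which I do not believe holds in general) or switch to the paper's cost-gap Lyapunov function; note also that your exponents $p=(1+\mu)/2$, $q=(1+\nu)/2$ would then become the paper's $(3+\mu)/4$, $(3+\nu)/4$.
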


\begin{proof}\label{proof}
The sum of power supplied by each generator at any time instant satisfies
\begin{align}
\nonumber \sum_{i=1}^{N}{{{P}_{i}}}& = \sum_{i=1}^{N}{\left(\sum_{j \in \mathcal{N}_i} {{{a}_{ij}}({{z}_{j}}-{{z}_{i}}) + D_{i0} + P_{L_i}} \right)}\\
\label{sum P} & = \sum\limits_{i = 1}^N D_{i0} + \sum\limits_{i = 1}^N P_{L_i} = P_T,  
\end{align}
as $\sum_{i=1}^{N} \sum_{j \in \mathcal{N}_j} a_{ij}({z_j} - {z_i}) = 0$ for an undirected and connected graph with $a_{ij} = a_{ji}$. Clearly, \eqref{sum P} satisfies the desired equality constraint \eqref{conditions 1}. Substituting for $P_i$ from \eqref{algorithm}, the optimization problem \eqref{conditions 1} can be represented as the following  unconstrained optimization problem:
\begin{equation}\label{min c}
\text{Min } {C}(z) = \sum_{i = 1}^N {{{C}_i}\left( {\sum\limits_{j \in {N_i}} {{a_{ij}}({z_j} - {z_i}) + D_{i0} + P_{Li}} } \right)}. 
\end{equation}
From \eqref{algorithm}, the derivative of ${P_i}$ with respect to ${z_j}$ is obtained as:
\begin{align}
\label{partail_derivative} \frac{{\partial {P_i}}}{{\partial {z_j}}} &= 
        \begin{cases}
            - \sum_{j = 1}^N {{a_{ij}}}  + \frac{{\partial {P_{L_i}}}}{{\partial {z_j}}} & \text{if} \ j = i\\
            {a_{ij}} + \frac{{\partial {P_{L_i}}}}{{\partial {z_j}}} & \text{if} \ j \neq i
        \end{cases}\\
       \label{partial_der_1} &= 
       \begin{cases}
       - \sum_{j = 1}^N {{a_{ij}}}  + \frac{{\partial {P_{L_i}}}}{{\partial {P_i}}}\frac{{\partial {P_{i}}}}{{\partial {z_j}}} & \text{if} \ j = i\\
       {a_{ij}} + \frac{{\partial {P_{L_i}}}}{{\partial {P_i}}}\frac{{\partial {P_{i}}}}{{\partial {z_j}}} & \text{if} \ j \neq i.
       \end{cases} 
    \end{align}
Once again using \eqref{partail_derivative} in \eqref{partial_der_1} for $\frac{{\partial {P_{i}}}}{{\partial {z_j}}}$, we have
\begin{equation}\label{partial der 2}
 \frac{{\partial {P_i}}}{{\partial {z_j}}}   = 
        \begin{cases}
            - \sum_{j = 1}^N {{a_{ij}}} + \frac{{\partial {P_{L_i}}}}{{\partial {P_i}}} \\
            \ \times \left[ - \sum_{j = 1}^N {{a_{ij}}}  + \frac{{\partial {P_{Li}}}}{{\partial {P_i}}}\frac{{\partial {P_{i}}}}{{\partial {z_j}}}\right] &  \text{if} \ j = i\\
            {a_{ij}} + \frac{{\partial {P_{Li}}}}{{\partial {P_i}}}\left[ {{a_{ij}}}  + \frac{{\partial {P_{L_i}}}}{{\partial {P_i}}}\frac{{\partial {P_{i}}}}{{\partial {z_j}}}\right] & \text{if} \ j \neq i.
       \end{cases}
    \end{equation}
Continuing the substitution in each step, an infinite series is formed for ${{\partial {P_i}}}/{{\partial {z_j}}}$, as below:
\begin{equation}\label{partial der 3}
 \frac{{\partial {P_i}}}{{\partial {z_j}}} =
 \begin{cases}
            - \sum_{j = 1}^{N} {{a_{ij}}}  - \sum_{j = 1}^N {{a_{ij}}}  \frac{{\partial {P_{L_i}}}}{{\partial {P_i}}} - \\
            \ \sum_{j = 1}^N {{a_{ij}}} \left(\frac{{\partial {P_{L_i}}}}{{\partial {P_i}}}\right)^2 - \cdots   & j = i\\
            {a_{ij}} + {a_{ij}}\frac{{\partial {P_{Li}}}}{{\partial {P_i}}}+{a_{ij}}\left(\frac{{\partial {P_{L_i}}}}{{\partial {P_i}}}\right)^2 + \cdots & j \neq i.
        \end{cases}
  \end{equation}
Following Assumption~{(A1)}, the higher order terms are neglected to get:
\begin{equation}\label{partial der 3.1}
         \frac{{\partial {P_i}}}{{\partial {z_j}}} =  
       \begin{cases}
            - \sum_{j = 1}^N {{a_{ij}}}\left(1+ \frac{{\partial {P_{Li}}}}{{\partial {P_i}}}\right)  & \text{if} \ j = i\\
            {a_{ij}}\left(1+ \frac{{\partial {P_{Li}}}}{{\partial {P_i}}}\right)  & \text{if} \ j \neq i,
        \end{cases}
    \end{equation}
which on substitution for $\partial P_{Li}/\partial P_i$ from Lemma \ref{lemma3.1} results in
\begin{equation}\label{partial der 3.2}
\frac{{\partial {P_i}}}{{\partial {z_j}}}   =  
\begin{cases}
- \sum\limits_{j = 1}^N {{a_{ij}}}  \left(1 +\frac{{1}}{{2}} \frac{{\partial {P_{L}}}}{{\partial {P_i}}}+\mathcal{B}_{ii}P_i+\frac{{\mathcal{B}_{i0}}}{{2}}\right)  &  \text{if} \ j = i\\
{a_{ij}}\left(1+ \frac{{1}}{{2}} \frac{{\partial {P_{L}}}}{{\partial {P_i}}}+\mathcal{B}_{ii}P_i+\frac{{\mathcal{B}_{i0}}}{{2}}\right) &  \text{if} \ j \neq i.
\end{cases}
\end{equation}
Similarly, it can be written from \eqref{algorithm} about the cost function that:
\begin{align}
       \nonumber & \frac{{\partial {C_i}}}{{\partial {z_j}}} = \frac{{\partial {C_i}}}{{\partial {P_i}}}\frac{{\partial {P_i}}}{{\partial {z_j}}}\\
\label{partial der 5} & =  
     \begin{cases}
     - \sum\limits_{j = 1}^N {{a_{ij}}}  \left(1 +\frac{{1}}{{2}} \frac{{\partial {P_{L}}}}{{\partial {P_i}}}+\mathcal{B}_{ii}P_i+\frac{{\mathcal{B}_{i0}}}{{2}}\right)\lambda_i  &  \text{if} \ j = i\\
     {a_{ij}}\left(1+ \frac{{1}}{{2}} \frac{{\partial {P_{L}}}}{{\partial {P_i}}}+\mathcal{B}_{ii}P_i+\frac{{\mathcal{B}_{i0}}}{{2}}\right)\lambda_i &  \text{if} \ j \neq i.
     \end{cases}   
    \end{align}
Note that the gradient of $P_L$ is $\nabla P_L(P) = \left[\frac{\partial P_L}{\partial P_1}, \ldots, \frac{\partial P_L}{\partial P_N}\right]^T$, using which, \eqref{partial der 3.2} can be expressed in the form of Jacobian as
\begin{align}
\nonumber {\mathcal{J}}_{P} &= {\displaystyle {\frac {\partial (P_{1}, \ldots,P_{N})}{\partial (z_{1}, \ldots,z_{N})}}}\\ 
\label{gradient.1} &= - (I_N + 0.5~\text{diag}\{\nabla P_L(P)\}+\text{diag}\{\nabla \mathcal{R}(P)\})\mathcal{L},
\end{align}
where $\nabla \mathcal{R}(P)$ is defined in Lemma~\ref{Lemma3} and $\mathcal{L}$ is the Laplacian of the underlying topology. Further, the gradient of ${C}(z)$, using \eqref{partial der 5}, is given by
\begin{align}\label{gradient}
\nabla {C}(z) =  - \mathcal{L} \bigg[\bigg(I_N+0.5&~\text{diag}\{\nabla P_L(P)\}&&\notag\\
 & + \text{diag} \{\nabla\mathcal{R}(P)\}\bigg)  \nabla {C}(P)\bigg].&&
\end{align}
We emphasize here that $\mathcal{J}_p$ is $N \times N$ matrix, while $\nabla {C}(z)$ is an $N \times 1$ vector, as $\nabla C(P) = \left[\frac{\partial C_1}{\partial P_1}, \ldots, \frac{\partial C_N}{\partial P_N}\right]^T$. The Hessian of \eqref{gradient} satisfies,
\begin{align}\label{equation123.1}
 \nonumber \nabla^2 {C}(z) =  -\mathcal{L}\nabla \bigg[\bigg(I_N + 0.5~& \text{diag}(\nabla P_L(P))&&\\
 &+\text{diag}(\nabla\mathcal{R}(P))\bigg)  \nabla {C}(P)\bigg]{\mathcal{J}}_{P},
\end{align}
which, further simplifying the term inside the square bracket and using \eqref{gradient.1}, yields
\begin{align}
 \nabla^2 {C}(z) &=  \mathcal{L} \bigg[\nabla^2 {C}(P)+0.5~\nabla (\nabla P_L(P)\odot\nabla {C}(P)) &&\notag\\
 & +\nabla (\nabla\mathcal{R}(P) \odot  \nabla {C}(P))\bigg] \times &&\notag\\
 & (I_N + 0.5~\text{diag}\{\nabla P_L(P)\}+\text{diag}\{\nabla \mathcal{R}(P)\})\mathcal{L}.
\end{align}
From Lemma~\ref{Lemma3}, please note that $\nabla P_L(P)\odot\nabla {C}(P) = \mathcal{F}$ and $\nabla\mathcal{R}(P)\odot \nabla {C}(P) = \mathcal{M}$, which implies that
\begin{align}\label{test1}
\nabla^2 {C}(z) & =  \mathcal{L} (\nabla^2 {C}(P)+0.5~ \nabla\mathcal{F}  + \nabla\mathcal{M})&&\notag \\ 
& \times (I_N + 0.5~\text{diag}\{\nabla P_L(P)\}+\text{diag}\{\nabla \mathcal{R}(P)\})\mathcal{L}. 
\end{align}
Let the optimal solution of convex optimization problem \eqref{min c} be given as $z^* = [{z}^\star_1, \ldots, {z}^\star_N]$. The trivial solution is given by $z^*\in \beta\pmb{1}_N$, where constant $\beta \in \mathbb{R}$. The focus of our analysis is on non-trivial case where solutions belong to the convex and compact set $\mathcal{Z} \subset \mathbb{R}^N \setminus \beta \pmb{1}_N$. For any $z, \xi \in \mathcal{Z}$, it follows for the strongly convex functions from \cite{BoydS2004} that,
\begin{equation}\label{gradient 3}
\mathcal{C}(z) = {C}(\xi)+\nabla^T{C}(\xi)(z-\xi) + \frac{1}{2} (z-\xi)^T\nabla^2 {C}(\hat{z})(z-\xi),
\end{equation}
where $\hat{z} = \xi + \eta(z-\xi)$ with $\eta \in [0, 1]$. Replacing $z, \xi$ by $z^\star, z$, respectively, \eqref{gradient 3} becomes
\begin{equation}\label{eq 2}
{C}(z^\star) = {C}(z) + \nabla^T {C}(z)(z^\star - z) + \frac{1}{2} (z^\star-z)^T\nabla^2 {C}(\tilde{z})(z^*-z),
\end{equation}
where $\tilde{z} = z + \eta(z^\star-z)$ with $\eta \in [0, 1]$. Rearranging \eqref{eq 2} as
\begin{equation*}
{C}(z)-{C}(z^\star) =\nabla^T {C}(z)(z-z^\star)- \frac{1}{2} (z^\star-z)^T\nabla^2 {C}(\tilde{z})(z^\star-z), 
\end{equation*}
and using Assumption~\ref{Assumption 2}, it holds that
\begin{align}\label{eq 4}
{C}(z)-\mathcal{C}(z^\star) \leq \nabla^T {C}(z)(z - z^\star).
\end{align}
Let  $\phi_1,\phi_2,\cdots,\phi_N$ be the eigenvalues of Laplacian $\mathcal{L}$ such that $0 = \phi_1 \leq \phi_2 \leq\ldots \leq \phi_N$ with corresponding orthogonal eigenvectors $\pmb{1}_N, v_2, \ldots, v_N$, where $||v_i|| = 1, \ i = 2, \ldots, N$. The vector $z-z^\star$ can be expressed as
\begin{align}\label{eq 1}
z-z^\star = \kappa_1 \pmb{1}_N + \kappa_2 v_2 + \ldots +\kappa_N v_N,
\end{align}
where $\kappa_i, i = \{1, \ldots, N\}$ are constants. Using \eqref{eq 1}, \eqref{eq 4} becomes 
\begin{align}\label{eq 5}
{C}(z)-{C}(z^\star) \leq \nabla^T{C}(z)(\kappa_1\pmb{1}_N + v), 
\end{align}
where $v = \kappa_2 v_2 + \ldots + \kappa_N v_n$. Note that $||v||^2 = \kappa^2_2 + \ldots + \kappa^2_N$. From \eqref{gradient} and \eqref{eq 5}, it follows that 
\begin{align}\label{gradient 6}
 \nonumber {C}(z)-\mathcal{C}(z^\star) &\leq  -\kappa_1  \left[\left(I_N + \text{diag}\{\nabla P_L(P)\}\right)\nabla {C}(P)\right]^T \mathcal{L}\pmb{1}_N \\
\nonumber  & ~~~~~~~~~~~ + \nabla^T {C}(z)v\\
& = \nabla^T {C}(z)v \leq ||\nabla^T {C}(z)||||v||,
\end{align}
as $\mathcal{L} = \mathcal{L}^T$ and $\mathcal{L}\pmb{1}_N = \pmb{0}_N$ for an undirected and connected graph. Now, substituting $\xi = z^\star$ in \eqref{gradient 3} and noting that $\nabla C(z^\star) = \pmb{0}_N$, we have
\begin{align}\label{gradient 7}
 {C}(z)-{C}(z^\star) = 0.5(z-z^\star)^T \nabla^2 {C}(\hat{z})(z-z^\star),
\end{align}
which, upon substitution from \eqref{test1}, gives
\begin{align}\label{gradient 8}
&{C}(z) - {C}(z^*) = 0.5 (\mathcal{L} (z-z^\star))^T ( \nabla^2 {C}(P) +0.5~ \nabla \mathcal{F}+\nabla \mathcal{M}) &&\notag\\ 
& \times  (I_N + 0.5~\text{diag}\{\nabla P_L(P)\}+\text{diag}\{\nabla \mathcal{R}(P)\})(\mathcal{L} (z-z^\star)).
\end{align}
According to Assumption~\ref{Assumption 3}, and Lemmas~\ref{lemma3.1} and \ref{Lemma3}, it is clear that $1+0.5~\nabla P_L(P_i)+\nabla \mathcal{R}(P_i) = 1 + \sum_{j = 1,j\neq i}^N \mathcal{B}_{ij} P_j  + 2\mathcal{B}_{ii}P_i + \mathcal{B}_{i0} \geq (1 + \mathcal{B}_{i0}) \geq (1+\rho)$ for each $i$, where $\rho = \min_{i}\{\mathcal{B}_{i0}\}$. This implies that the diagonal matrix $I_N + 0.5~\text{diag}\{\nabla P_L(P)\}+\text{diag}\{\nabla\mathcal{R}(P)\} \succeq  (1+\rho)I_N$. Consequently, it holds from \eqref{gradient 8} that,
 \begin{align}\label{gradient 8.001}
 {C}(z) - {C}(z^*) &\geq 0.5(1+\rho) (\mathcal{L} (z-z^\star))^T &&\notag\\
 &\times ( \nabla^2 {C}(P) +0.5~ \nabla \mathcal{F}+\nabla \mathcal{M})(\mathcal{L} (z-z^\star)) &&\notag\\
& = 0.5(1+\rho) (\mathcal{L} (z-z^\star))^T \mathcal{Q}(\mathcal{L} (z-z^\star)),
\end{align}
where $\mathcal{Q} = \nabla^2 {C}(P) +0.5~ \nabla \mathcal{F}+\nabla \mathcal{M}$, according to Lemma~\ref{Lemma3}. Further, one can write using result (R4) from Lemma~\ref{Lemma3} that:
 \begin{align}\label{A1}
  {C}(z) - {C}(z^*) \geq 0.5(1+\rho) (\mathcal{L} (z-z^\star))^T \mathcal{S}(\mathcal{L} (z-z^\star)),
\end{align}
where $\mathcal{S}$ is a symmetric matrix with eigenvalues $\tau_1 \leq \tau_2\leq \cdots \leq \tau_N$, as per Lemma~\ref{Lemma3}. Using Courant-Fischer theorem [\cite{horn2012matrix}, Chapter~4, pg. 236] for the symmetric matrix $\mathcal{S}$, it holds for $z \in \mathcal{Z}$ that
 \begin{align}\label{A}
&{C}(z) - {C}(z^*) \geq 0.5(1+\rho) \tau_1 (\mathcal{L} (z-z^\star))^T (\mathcal{L} (z-z^\star)) \notag\\
&\geq 0.5(1+\rho) \tau_1 (\kappa_2 \phi_2 v_2 +\kappa_3 \phi_3 v_3+\cdots+\kappa_N \phi_N v_N)^T\notag\\& \times (\kappa_2 \phi_2 v_2 +\kappa_3 \phi_3 v_3+\cdots+\kappa_N \phi_N v_N)\notag\\
&= 0.5 (1+\rho) \tau_1 ({\kappa_2}^2 {\phi_2}^2  +{\kappa_3}^2 {\phi_3}^2 +\cdots+{\kappa_N}^2 {\phi_N}^2 )\notag\\
&\geq  0.5 (1+\rho) \tau_1 \phi^2_2 ||v||^2,
\end{align}
using \eqref{eq 1}. Now, it follows from \eqref{gradient 6} and \eqref{A} that
\begin{align}\label{equn 3}
\nonumber ||\nabla^T {C}(z)||^2||v||^2 &\geq 0.5 (1+\rho) \tau_1 \phi_2^2 ||v||^2( {C}(z)- {C}(z^\star))\\
\implies ||\nabla^T {C}(z)||^2 & \geq 0.5 (1+\rho) \tau_1 \phi_2^2 ( {C}(z)- {C}(z^*)),
\end{align}
as $||v|| \neq 0$ for non-trivial optimal solution. It is worth noticing that $\tau_i > 0, \forall i$, under Assumption~(A2). This follows from the fact (R5) in Lemma~\ref{Lemma3} that $\tau_i  \geq (1+\mathcal{B}_{i0})\sigma + b_1 \delta \geq (1+\rho)\sigma + b_1 \delta > 0$ if $\delta > 0$; and $\tau_i  \geq (1+\mathcal{B}_{i0})\sigma + b_N \delta \geq (1+\rho)\sigma + b_N \delta > 0$ if $\delta < 0$, as per Assumption~(A2).  

Next, we consider the candidate Lyapunov function
\begin{equation}\label{equn 5}
V = 0.5({C}(z)-{C}(z^\star))^2,
\end{equation}
whose time derivative is
\begin{equation}\label{equn 6}
\dot V = ({C}(z)-{C}(z^\star))\nabla^T{C}(z)\dot{z}.
\end{equation}
Using \eqref{algorithm} and \eqref{partial der 5}, it yields that
\begin{align}
\dot z_i &=  - {k_1}  \sig{\left( {\frac{{\partial {C_i}}}{{\partial {z_i}}} + \sum\limits_{j = 1,j \ne i}^N {\frac{{\partial {C_i}}}{{\partial {z_i}}}} } \right)^\mu } &&\notag\\
& \ \ \ \ \ \ \ \ \ \ - {k_2} \sig{\left( {\frac{{\partial {C_i}}}{{\partial {z_i}}} + \sum\limits_{j = 1,j \ne i}^N {\frac{{\partial {C_i}}}{{\partial {z_i}}}} } \right)^\nu }  &&\notag\\
& =  - {k_1} \sig{\left( {\frac{{\partial C}}{{\partial {z_i}}}} \right)^\mu } - {k_2} \sig{\left( {\frac{{\partial C}}{{\partial {z_i}}}} \right)^\nu}, \ \forall i.
\end{align}
It can be rewritten in vector notations that 
\begin{equation*}
\dot z =  - {k_1}\sig{(\nabla {C}(z))^\mu } - {k_2}\sig{(\nabla {C}(z))^\nu},
\end{equation*}
which on substitution in \eqref{equn 6}, yields 
\begin{equation*}
\dot V = ({C}(z)-{C}(z^\star))\nabla^T{C}(z)( - {k_1}\sig{(\nabla {C}(z))^\mu }  - {k_2}\sig{(\nabla {C}(z))^\nu }).
\end{equation*}
Note that \cite{chen2018fixed}
\begin{align*}
\nabla^T {C}(z) \sig{(\nabla {C}(z))^\mu} &= \sum_{i = 1}^N {{{\left|{\frac{{\partial {C}}}{{\partial {z_i}}}} \right|}^{(\mu  + 1)}}} = \sum_{i = 1}^N {{{\left( {\frac{{\partial {C}}}{{\partial {z_i}}}} \right)}^{2\frac{(\mu  + 1)}{2}}}}\\
\nabla^T {C}(z) \sig{(\nabla {C}(z))^\nu } &= \sum_{i = 1}^N {{{\left|{\frac{{\partial {C}}}{{\partial {z_i}}}} \right|}^{(\nu  + 1)}}} = \sum_{i = 1}^N {{{\left( {\frac{{\partial {C}}}{{\partial {z_i}}}} \right)}^{2\frac{(\nu  + 1)}{2}}}}.
\end{align*}
Using these relations, we have
\begin{align}\label{equn 14}
\dot V &= ({C}(z)-{C}(z^\star)) \times &&\notag\\
& \ \bigg[-k_1\sum\limits_{i = 1}^N {{{\left( {\frac{{\partial {C}}}{{\partial {z_i}}}} \right)}^{2\frac{(\mu  + 1)}{2}}}} - k_2  \sum\limits_{i = 1}^N {{{\left( {\frac{{\partial {C}}}{{\partial {z_i}}}} \right)}^{2\frac{(\nu  + 1)}{2}}}} \bigg].
\end{align}
From Lemma~\ref{Lemma 2}, 
\begin{align*}
\sum\limits_{i = 1}^N \left(\frac{\partial C}{\partial z_i} \right)^{2\left(\frac{1 + \mu}{2}\right)} & \geq \left( \sum\limits_{i = 1}^N \left(\frac{\partial C}{\partial z_i} \right)^2 \right)^{\frac{1 + \mu}{2}}\\
\sum\limits_{i = 1}^N \left(\frac{\partial C}{\partial z_i} \right)^{2\left(\frac{1 + \nu}{2} \right)} & \geq N^{\frac{1-\nu}{2}} {{\left( {\sum\limits_{i = 1}^N {{{\left( {\frac{{\partial {C}}}{{\partial {z_i}}}} \right)}^2}} } \right)}^{\frac{{1 + \nu }}{2}}},
\end{align*}
implying that
\begin{align*}
\dot V \leq & -{k_1}({C}(z)-{C}(z^\star)) {{\left( {\sum\limits_{i = 1}^N {{{\left( {\frac{{\partial {C}}}{{\partial {z_i}}}} \right)}^2}} } \right)}^{\frac{{1 + \mu }}{2}}}&&\\
 & - {k_2}({C}(z)-{C}(z^*)) N^{\frac{1-\nu}{2}}{{\left( {\sum\limits_{i = 1}^N {{{\left( {\frac{{\partial {C}}}{{\partial {z_i}}}} \right)}^2}} } \right)}^{\frac{{1 + \nu }}{2}}}\\
\leq & -{k_1}({C}(z)-{C}(z^*)) (||\nabla {C}(z)||^2)^{\frac{{1 + \mu }}{2}}\\
		& - {k_2}({C}(z)-{C}(z^*)) N^{\frac{1-\nu}{2}}(||\nabla {C}(z)||^2)^{\frac{{1 + \nu }}{2}}. 
\end{align*}
Using \eqref{equn 3} and \eqref{equn 5}, we have
\begin{align*}
&\dot V  \leq  -{k_1}({C}(z)-{C}(z^\star)) [0.5 (1+\rho)\tau_1 \phi_2^2 ({C}(z)-{C}(z^\star))]^{\frac{1+\mu}{2}}\\
 & - {k_2} N^{\frac{1-\nu}{2}}({C}(z)-{C}(z^*)) [0.5 (1+\rho)\tau_1 \phi_2^2 ({C}(z)-{C}(z^\star))]^{\frac{1+\nu}{2}}\\
& \leq  -{k_1}(0.5 (1+\rho) \tau_1 \phi_2^2)^{\frac{1+\mu}{2}}2^{{\frac{3+\mu}{4}}} (V)^{{\frac{3+\mu}{4}}} \\
&~~~~~~~~ -{k_2} N^{\frac{1-\nu}{2}}(0.5 (1+\rho) \tau_1 \phi_2^2)^{\frac{1+\nu}{2}}2^{{\frac{3+\nu}{4}}}(V)^{{\frac{3+\nu}{4}}}\\
& \leq  -{k_1}((1+\rho) \tau_1 \phi_2^2)^{\frac{1+\mu}{2}}2^{{\frac{1-\mu}{4}}} (V)^{{\frac{3+\mu}{4}}}\\
&~~~~~~~~ - {k_2} N^{\frac{1-\nu}{2}}((1+\rho) \tau_1 \phi_2^2)^{\frac{1+\nu}{2}}2^{{\frac{1-\nu}{4}}}(V)^{{\frac{3+\nu}{4}}}.
\end{align*}
Following Lemma~\ref{Lemma}, it can be concluded that $\dot V\leq-(\alpha V^p+\beta V^q)$ with $k=1, \alpha={k_1}((1+\rho) \tau_1 \phi_2^2)^{\frac{1+\mu}{2}}2^{{\frac{1-\mu}{4}}}, \beta={k_2} N^{\frac{1-\nu}{2}}((1+\rho) \tau_1 \phi_2^2)^{\frac{1+\nu}{2}}2^{{\frac{1-\nu}{4}}}$, and $ 0 < p={{\frac{3+\mu}{4}}} < 1, q = {{\frac{3+\nu}{4}}} > 1$, and hence, the settling time is bounded by
\begin{align}\label{equn 22} 
\nonumber T_s &\le \frac{4}{{k_1}((1+\rho) \tau_1 \phi_2^2)^{\frac{1+\mu}{2}}2^{{\frac{1-\mu}{4}}}(1-\mu)} \\
&~~~~~~~~ + \frac{4}{{k_2} N^{\frac{1-\nu}{2}}((1+\rho) \tau_1 \phi_2^2)^{\frac{1+\nu}{2}}2^{{\frac{1-\nu}{4}}}(\nu-1)}.
\end{align}
This implies that $V \to 0$ as $t \to T_s$, and hence it follows from \eqref{gradient 7} and \eqref{equn 5} that $z = z^\star$ as $t \geq T_s$. This concludes the proof.
\end{proof}

\begin{remark}
Note that the right-side of the inequality \eqref{equn 22} is well-defined as $\tau_1 > 0$, under Assumption~\emph{(A2)}. According to Lemma~\ref{Lemma3}, since $\tau_1$ depends upon the eigenvalues of matrix $\mathcal{B}$, coefficients $\mathcal{B}_{i0}$, and the constants $\sigma, \delta$ associated with the cost functions $C_i$, the settling time $T_s$ shows a dependence on these parameters, and network topology because of the occurrence of $\phi_2$ (the second smallest eigenvalue of the Laplacian $\mathcal{L}$ associated with the local network). In fact, the inequality \eqref{equn 22} provides an estimate of the upper bound for the convergence time. Its value is robust to the changes in the initial conditions and power transmission losses. The actual convergence time may be much less than this estimated value.	
\end{remark}

\begin{figure}[t]
	\centering 
	\includegraphics[width=0.47\textwidth]{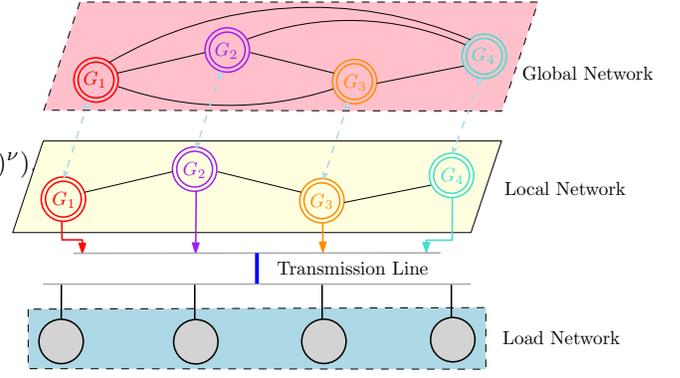}
	\caption{Generator-load configuration with a two-layered interaction topology. Global network is used for sharing the generated power, while the local network for cost function and auxiliary variable in algorithm \eqref{algorithm}.}
	\label{network}
\end{figure}

\begin{figure*}[t]
	\centering 
	\subfigure[$P_i$ Vs time]{\includegraphics[width=0.34\textwidth]{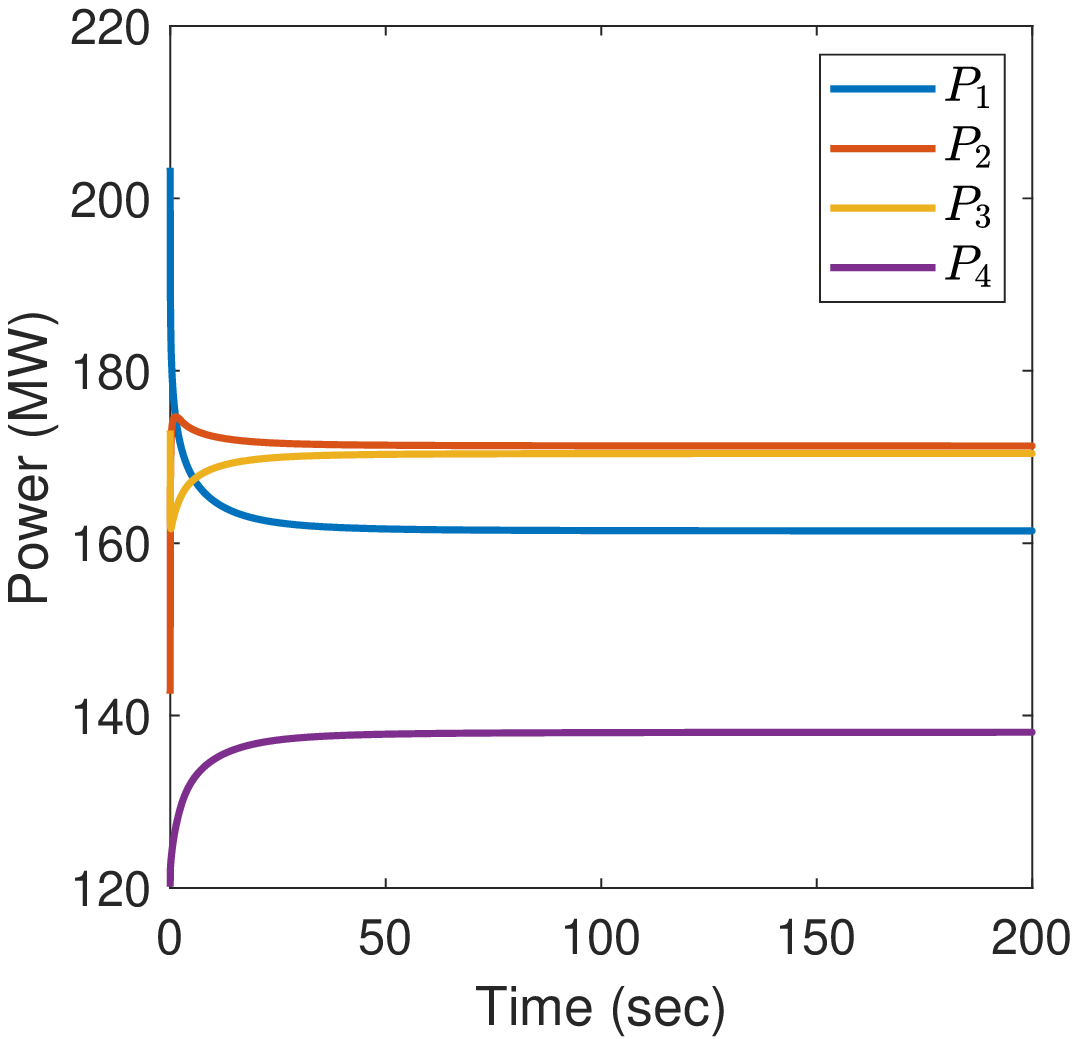}} \hspace*{-0.5cm}
	\subfigure[$P_L$ Vs time]{\includegraphics[width=0.34\textwidth]{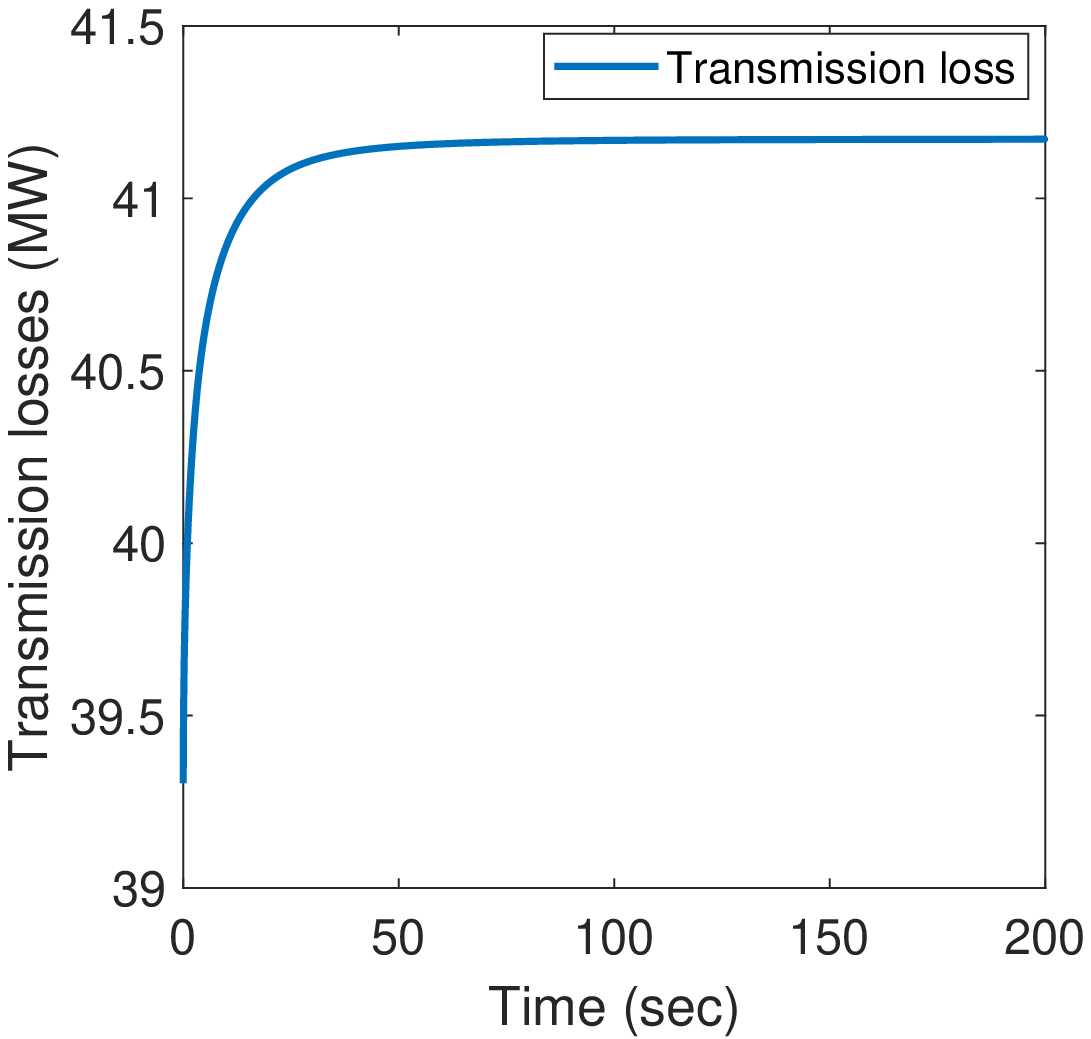}} \hspace*{-0.5cm}
	\subfigure[$\sum_{i = 1}^N P_i$ Vs time]{\includegraphics[width=0.35\textwidth]{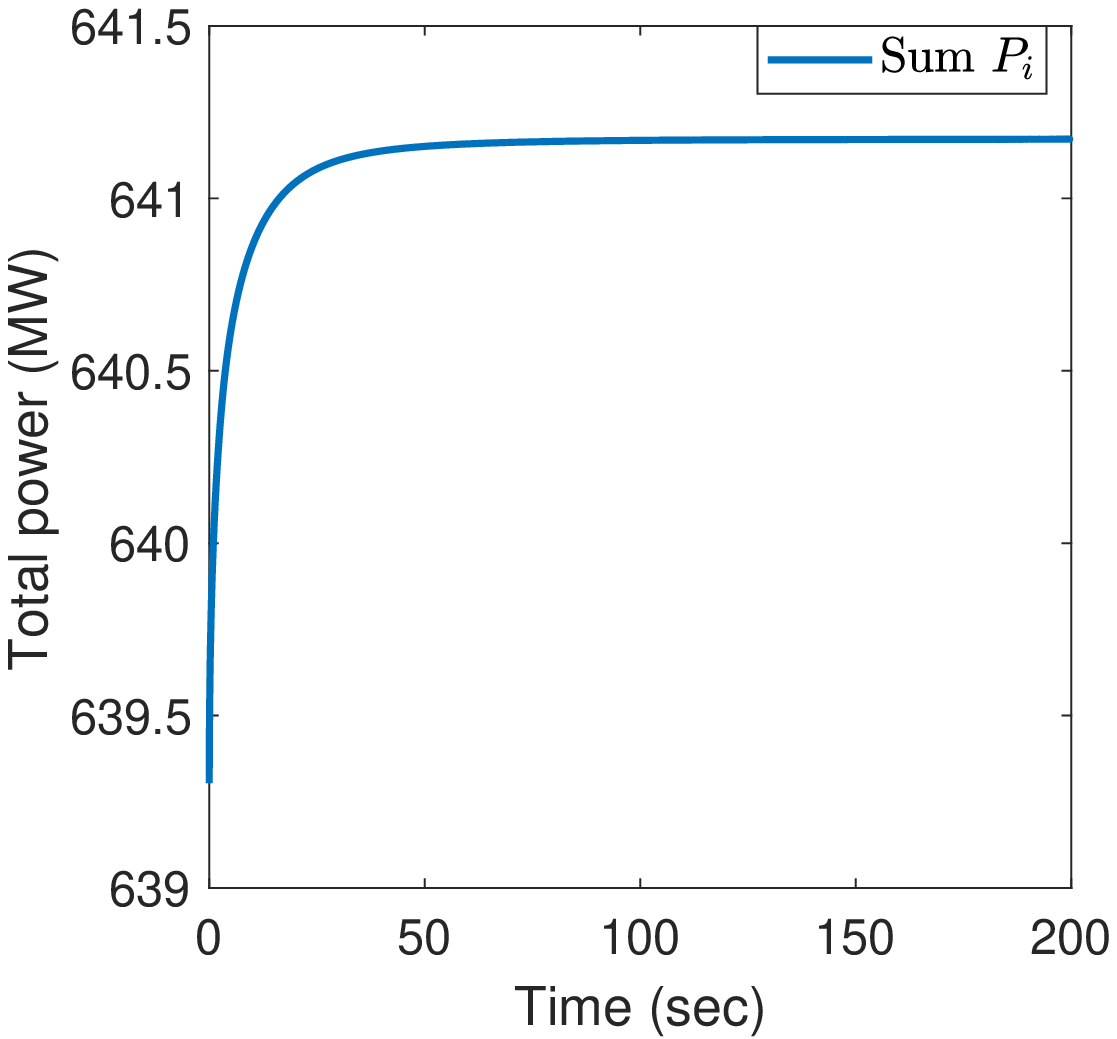}}
	\caption{Evolution of generated power, transmission losses and total power during $0-200$ sec. Clearly, the power supplied by generator network is equal to the sum of load demand and power transmission losses at each instant of time.} 
	\label{optimum_power}
\end{figure*}

\section{Simulation Example}\label{section4}
\begin{table}[]
\centering
\caption{Generator cost parameters.}
\begin{tabular}{|c|c|c|c|}
\hline
Bus   &  $a_i$($\$/$h)  &  $b_i$($\$/$MWh) &  $c_i$($\$/$MW$^2$h) \\ 
\hline\hline
 $G_1$  & 53      & 1.21         & 0.094          \\ \hline
 $G_2$ & 34       & 3.47         & 0.082       \\ \hline
 $G_3$ & 45       & 2.24         & 0.086      \\ \hline
 $G_4$ & 78       & 2.55         & 0.105       \\ \hline
\end{tabular}
\label{tab:caption}
\end{table}

Consider a power system network of four generators comprising a two-layered interaction topology, as shown in Fig.~\ref{network}. The generators share power outputs globally and the other auxiliary variables in algorithm \eqref{algorithm} are shared locally. The power generation cost associated with each generator is characterized by the quadratic function ${C}_i(P_i) = c_i P_i^2+ b_i P_i + a_i$, where $a_i, b_i, c_i$ are the cost coefficients. The economic dispatch problem can be described as: $\text{Min } {C}(P) = \sum_{i = 1}^4  c_i P_i^2 + b_iP_i + a_i$, {subject to } $\sum_{i = 1}^4 {P_i} = D+P_L$. The values of $a_i, b_i, c_i$ are given in Table \eqref{tab:caption}. Clearly, $\sigma = 2\min_i\{c_i\} =  0.164$ and $\delta = \min_i\{b_i\} = 1.21$. Let the total load demand be $D = 600$ MW. The initial power supplied by the generators are given by $P_1(0) = 170$ MW, $P_2(0) = 110$ MW, $P_3(0) = 140$ MW, $P_4(0) = 180$ MW. The power transmission losses \eqref{Transmission losses B loss} are obtained by setting the $\mathcal{B}-$loss coefficients as: 
\begin{equation*}
\mathcal{B}
=
\begin{bmatrix}
0.1200  &  0.0286  &  0.0481  &  0.0321\\
0.0286  &  0.1341  &  0.0511  &  0.1251\\
0.0481  &  0.0511  &  0.1539  &  0.1463\\
0.0321  &  0.1251  &  0.1463  &  0.1612
\end{bmatrix}
\times 10^{-3},
\end{equation*}
which is symmetric with all positive entries and $\mathcal{B}_0 = [2.0, 1.0, 2.5, 1.5]^T \times 10^{-3}; B_{00}=4$. It can be easily verified that the condition in Remark~\ref{B_coeff_justification} holds for the given load demand $D = 600$ MW, and the chosen $\mathcal{B}-$coefficients.  

\begin{itemize}
\item The algorithm \eqref{algorithm} is simulated with control parameters as $k_1 = k_2 = 5$, and $\mu = 0.5, \nu = 2$. The optimal power supplied by the generators are obtained as $P_1^* =161.4$ MW, $P_2^* = 171.3$ MW, $P_3^* = 170.4$ MW and $P_4^*=138.1$ MW, as shown in Fig.~\ref{optimum_power} (a). The total power supplied is $P_T = 641.2$ MW, meeting the load demand $D = 600$ MW and the power transmission losses $41.2$ MW at the optimal solution (see Figs.~\ref{optimum_power}(b) and \ref{optimum_power}(c)). One can observe from Fig.~\ref{optimum_power} that the demand and transmission losses are supplied by the generators at every instant of time. The optimal cost is plotted in Fig.~\ref{cost_function}, and is evaluated to be $\$ 11093$.

\item We verify the convergence time in these plots by evaluating the settling time $T_s$ in \eqref{equn 22}. For the given values of $\sigma, \delta$ and $\mathcal{B}-$coefficients, the matrix $\mathcal{S}$ in Lemma~\ref{Lemma3} is obtained as (considering each entry with four significant decimal places):
\begin{equation*}
\mathcal{S}
=
\begin{bmatrix}

    0.1646  &  0.0000 &   0.0001  &  0.0000 \\
    0.0000  &  0.1645  &  0.0001  &  0.0002 \\
    0.0001  &  0.0001  &  0.1648  &  0.0002 \\
    0.0000  &  0.0002  &  0.0002  &  0.1646 \\
\end{bmatrix}.
\end{equation*}
The minimum eigenvalues of $\mathcal{B}$ and $\mathcal{S}$ are $b_1 = -0.0161 \times 10^{-3}$ and $\tau_1 =  0.1644$, respectively. Further, $\rho= \min_i\{\mathcal{B}_{i0}\} = 1.0 \times 10^{-3}$ and the value of $\phi_2 = 0.5858$. It can be easily verified that $(1+\rho)\sigma + b_1\delta = (1+1.0 \times 10^{-3})\times0.164 + (-0.0161 \times 10^{-3})\times 1.21 = 0.1641 > 0$, satisfying the Assumption~(A2). Using the above values, the settling time is obtained as $T_s = 154.47$ sec, which supports our simulation results in Figs.~\ref{optimum_power} and \ref{cost_function}.

\item Furthermore, we have observed through simulations that the dynamics $\dot{z}_i$ is robust to the additive bounded disturbances with zero mean. That is, for any uniformly bounded zero-mean signal $w_i: \mathbb{R}_+ \to \mathbb{R}$ for each $i$, $\dot{z}_i =  - {k_1}\sig[\sum_{j \in {N_i}} {a_{ij}}({H_j}{\lambda _j} - {H_i}{\lambda _i})]^\mu - {k_2}\sig[{\sum_{j \in {N_i}} {{a_{ij}}({H_j}{\lambda _j} -{H_i}{\lambda _i})}}] ^\nu + w_i$ has no effect on the solution of algorithm~\eqref{algorithm}.   
\end{itemize}

\begin{figure}[t]
\centering 
\includegraphics[width=0.42\textwidth]{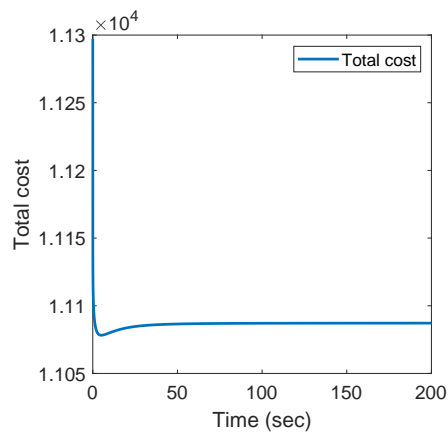} 
\caption{Evolution of $\sum_{i = 1}^N C_i (P_i)$ during $0–200$ s.}
\label{cost_function}
\end{figure}


\section{Conclusion}\label{section5}
In this paper, we investigated the EDP with Kron's modeled power transmission losses, under a few assumptions on the $\mathcal{B}-$loss coefficients, network topology, and the convexity of the cost functions associated with each generator. The time-varying power transmission losses are incorporated in the equality constraints of considered EDP. It is shown that the proposed consensus-based (partially distributed) algorithm solves the EDP in a finite time, which is upper bounded by a term relying on the eigenvalues of the matrix $\mathcal{B}$, local Laplacian, and the constants describing the convexity of the cost functions. 

Although for the approximated power transmission losses, the proposed algorithm can be implemented in a fully distributed manner (see Remark~1). However, it remains a challenging problem to come up which such an algorithm accounting for Kron's modeled power transmission losses without an approximation. Besides, there are several possibilities for future work such as a) consideration of directed communication topology with time-delay in information sharing among generators b) incorporation of fluctuation in load demand.


\bibliographystyle{IEEEtran}
\bibliography{References}


\appendix

\subsection{Proof of Lemma~\ref{Lemma3}}

\begin{proof} 
The proof is provided sequentially for each step.
\begin{enumerate}[label={(R\arabic*)}]
\item By definition, $\mathcal{F} \in \mathbb{R}^N$ is a column vector (please notice the use of Hadamard product $\odot$) with $i^{\text{th}}$ entry $\mathcal{F}_{i} = ({\partial P_L}/{\partial P_i})({\partial {C}}/{\partial P_i})$, where, 
\begin{itemize}
	\item ${\partial P_L}/{\partial P_i}$ depends on both $P_i$ and $P_j$, according to Lemma~\ref{lemma3.1},
	\item ${\partial {C}}/{\partial P_i} = ({\partial {C}}/{\partial C_i})({\partial {C_i}}/{\partial P_i}) = {\partial {C_i}}/{\partial P_i}$ depends only on $P_i$ (as ${\partial {C}}/{\partial C_i} = 1$, according to \eqref{conditions 1}). 
\end{itemize}
Therefore, $\nabla \mathcal{F}$ is an $N \times N$ matrix, whose diagonal entries can be obtained using chain rule as
	\begin{align*}
	[\nabla \mathcal{F}_{ii}] &= \frac{\partial}{\partial P_i}\left[\frac{\partial P_L}{\partial P_i}\frac{\partial {C_i}}{\partial P_i}\right] = \frac{\partial}{\partial P_i}\left[\frac{\partial P_L}{\partial P_i}\right]\frac{\partial {C_i}}{\partial P_i} + \frac{\partial P_L}{\partial P_i}\frac{\partial^2 {C_i}}{\partial P^2_i},
	\end{align*}
which using Lemma~\ref{lemma3.1} can be written as
	\begin{align*}	
 	[\nabla \mathcal{F}_{ii}] &= 2\mathcal{B}_{ii}\left(\frac{\partial C_i(P_i)}{\partial P_i} + P_i\frac{\partial^2 C_i(P_i)}{\partial P^2_i}\right)\\
	& + \frac{\partial^2 C_i(P_i)}{\partial P^2_i}\left[\sum _{j = 1, j \ne i}^N {2{\mathcal{B}_{ij}}{P_j}} \right] +\mathcal{B}_{i0} \frac{\partial^2 C_i(P_i)}{\partial P^2_i}\\
	&= 2\mathcal{B}_{ii}\frac{\partial C_i(P_i)}{\partial P_i} + \frac{\partial^2 C_i(P_i)}{\partial P^2_i} \left(\mathcal{B}_{i0} + 2\mathcal{B}_{i0}P_i + \sum _{j = 1, j \ne i}^N {2{\mathcal{B}_{ij}}{P_j}} \right)\\
	& \geq 2\mathcal{B}_{ii}\frac{\partial C_i(P_i)}{\partial P_i}+ \mathcal{B}_{i0}\frac{\partial^2 C_i(P_i)}{\partial P^2_i}\\
	& \geq  2\mathcal{B}_{ii}\delta+\mathcal{B}_{i0}\sigma,
	\end{align*}
	under Assumptions~\ref{Assumption 2} and \ref{Assumption 3} for $P_i, P_j \geq 0, \forall i,j$. Similarly, the off-diagonal entries are given by
	\begin{align*}
	[\nabla \mathcal{F}_{ij}] & = \frac{\partial}{\partial P_j}\left[\frac{\partial P_L}{\partial P_i}\frac{\partial {C_i}}{\partial P_i}\right] = \frac{\partial {C_i}}{\partial P_i} \frac{\partial}{\partial P_j}\left[\frac{\partial P_L}{\partial P_i}\right] \\	
	& = 2\mathcal{B}_{ij}\frac{\partial C_i(P_i)}{\partial P_i}\geq 2\mathcal{B}_{ij}\delta. 
	\end{align*}
\item Clearly, $\mathcal{M} \in \mathbb{R}^N$ is a column vector with $i^{\text{th}}$ entry	$\mathcal{M}_{i} = [ \mathcal{B}_{ii} P_i + ({\mathcal{B}_{i0}}/{2})] ({\partial C_i(P_i)}/{\partial P_i})$, which depends only on $P_i$ for each $i$. As a result, $\nabla \mathcal{M}$ is an $N \times N$ diagonal matrix with diagonal entries
	\begin{align*}
	[\nabla \mathcal{M}_{ii}] &= \mathcal{B}_{ii}\frac{\partial C_i(P_i)}{\partial P_i}+\left[ \mathcal{B}_{ii}P_i+\frac{\mathcal{B}_{i0}}{2}\right] \frac{\partial^2 C_i(P_i)}{\partial P_i^2}\\
	&\geq \left[\mathcal{B}_{ii}\delta+\frac{\mathcal{B}_{i0}}{2}\sigma\right],
	\end{align*}
	for $P_i \geq 0, \forall i$ and following Assumptions~\ref{Assumption 2} and \ref{Assumption 3}.
\item The poof of this statement is straightforward and follows the similar steps as above. 
\item From Assumption~\ref{Assumption 3}, it is obvious that $\mathcal{S}$ is a symmetric matrix. Further, using (R3) it trivially holds that $\mathcal{S} \preccurlyeq \mathcal{Q}$.
\item Since $\mathcal{S}$ is a symmetric matrix, its eigenvalues are real and can be arranged as $\tau_1 \leq \tau_2 \cdots \leq \tau_N$. By construction, $\mathcal{S}$ can be written as the summation of two symmetric matrices $\sigma\text{diag}\{(1 + \mathcal{B}_{i0})\}$ and $\delta \mathcal{B}$, where the constants $\sigma$ and $\delta$ are defined in Assumption~\ref{Assumption 2}. Notice that the eigenvalues of $\sigma\text{diag}\{(1 + \mathcal{B}_{i0})\}$ are $[\sigma(1 + \mathcal{B}_{i0})]_{i=1}^{N}$, while for $\delta \mathcal{B}$ are $b_1 \delta \leq b_2 \delta \leq \cdots \leq b_N \delta$, if $\delta > 0$; and $b_N \delta \leq b_{N-1} \delta \leq \cdots \leq b_1 \delta$, if $\delta < 0$. Now, the results immediately follows by applying the Weyl's theorem [\cite{horn2012matrix}, Chapter~4, pg. 239].  
\end{enumerate}
\end{proof}

\end{document}